\begin{document}

\newtheorem{theorem}{Theorem}[section]
\newtheorem{lemma}[theorem]{Lemma}
\newtheorem{proposition}[theorem]{Proposition}
\newtheorem{corollary}[theorem]{Corollary}
\newtheorem{conjecture}[theorem]{Conjecture}
\newtheorem{example}{Example}

\newtheorem{remark}[theorem]{Remark}
\newtheorem{question}[theorem]{Question}

\numberwithin{equation}{section}

\def\s{{\bf s}} 
\def\t{{\bf t}} 
\def\u{{\bf u}} 
\def\x{{\bf x}} 
\def\y{{\bf y}} 
\def\z{{\bf z}} 
\def\B{{\bf B}} 
\def\C{{\bf C}} 
\def\D{{\bf D}}
\def\K{{\bf K}}
\def\F{{\bf F}}
\def\M{{\bf M}}
\def\ML{{\bf ML}}
\def\Nn{{\bf N}}
\def\G{{\bf \Gamma}} 
\def\W{{\bf W}}
\def\X{{\bf X}}
\def\U{{\bf U}}
\def\V{{\bf V}}
\def\Un{{\bf 1}}
\def\Y{{\bf Y}}
\def\Z{{\bf Z}}
\def\P{{\bf P}}
\def\Q{{\bf Q}}
\def\S{{\bf S}}
\def\L{{\bf L}}
\def\T{{\bf T}}

\def\cB{{\mathcal{B}}} 
\def\cC{{\mathcal{C}}} 
\def\cD{{\mathcal{D}}} 
\def\cG{{\mathcal{G}}} 
\def\cK{{\mathcal{K}}} 
\def\cL{{\mathcal{L}}} 
\def\cR{{\mathcal{R}}} 
\def\cS{{\mathcal{S}}}
\def\cU{{\mathcal{U}}}
\def\cV{{\mathcal{V}}} 
\def\cX{{\mathcal X}}
\def\cY{{\mathcal Y}}
\def\cZ{{\mathcal Z}}

\def\Ea{E_\a}
\def\eps{{\varepsilon}} 
\def\esp{{\mathbb{E}}} 
\def\Ga{{\Gamma}}

\def\lacc{\left\{}
\def\lcr{\left[}
\def\lpa{\left(}
\def\lva{\left|}
\def\racc{\right\}}
\def\rpa{\right)}
\def\rcr{\right]}
\def\rva{\right|}

\def\prst{{\leq_{st}}}
\def\prost{{\prec_{st}}}
\def\prcvx{{\prec_{cx}}}
\def\Rr{{\bf R}}

\def\CC{{\mathbb{C}}}
\def\EE{{\mathbb{E}}}
\def\NN{{\mathbb{N}}} 
\def\QQ{{\mathbb{Q}}} 
\def\PP{{\mathbb{P}}}
\def\ZZ{{\mathbb{Z}}}
\def\RR{{\mathbb{R}}}

\def\Tt{{\bf \Theta}}
\def\Ttt{{\tilde \Tt}}

\def\a{\alpha}
\def\A{{\bf A}}
\def\AA{{\mathcal A}}
\def\hAA{{\hat \AA}}
\def\hL{{\hat L}}
\def\hT{{\hat T}}

\def\claw{\stackrel{(d)}{\longrightarrow}}
\def\elaw{\stackrel{(d)}{=}}
\def\pslaw{\stackrel{a.s.}{\longrightarrow}}
\def\qed{\hfill$\square$}

\newcommand*\pFqskip{8mu}
\catcode`,\active
\newcommand*\pFq{\begingroup
        \catcode`\,\active
        \def ,{\mskip\pFqskip\relax}%
        \dopFq
}
\catcode`\,12
\def\dopFq#1#2#3#4#5{%
        {}_{#1}F_{#2}\biggl[\genfrac..{0pt}{}{#3}{#4};#5\biggr]%
        \endgroup
}

\def\ii{{\rm i}}

\title[ID]{Moments of Gamma type and three-parametric Mittag-Leffler function}

\author[M.~Wang]{Min Wang}

\address{School of Mathematics and Statistics, Wuhan University, Wuhan 430072, China }

\email{minwang@whu.edu.cn}

\keywords{Moments of Gamma type; Mittag-Leffler function; alpha-Cauchy distribution; Infinite divisibility; Hausdorff moment problem}

\subjclass[2020]{33E12, 60E10, 60E07, 60E05}

\begin{abstract} 
We study a class of positive random variables having moments of Gamma type, whose density can be expressed by the three-parametric Mittag-Leffler functions. 
We give some necessary conditions and some sufficient conditions for their existence. As a corollary, we give some conditions for non-negativity of the three-parametric Mittag-Leffler functions.  
As an application, we study the infinite divisibility of the powers of half  $\a$-Cauchy variable. In addition, we find that a random variable $\X$ having moment of Gamma type if and only if $\log \X$ is quasi infinitely divisible. From this perspective, we can solve many Hausdorff moment problems of sequences of factorial ratios.   
\end{abstract}

\maketitle

\section{Introduction}
\subsection{Moments of Gamma type}
Let $\X$ be a positive random variable having moments of Gamma type, namely, for $s$ in some interval, 
\begin{equation}\label{eq def moments of Gamma type}
    \mathbb{E}(\X^s) = C D^s  \frac{ \prod_{j = 1}^{J} \Gamma(A_j s + a_j)}{\prod_{k = 1}^{K}\Gamma(B_k s + b_k)},
\end{equation}
for some integers $J, K \geq 0$ and some real constants $A_j \neq 0, B_k \neq 0, D > 0, a_j, b_k, C$.  
Typical examples are the laws of products of independent random variables with Gamma and Beta distribution. For more examples, we refer to Janson \cite[Section 3]{Jan10}.  Throughout, we denote the Gamma and Beta random variables by $\G_{c}$ and  $\B_{a, b}$, whose respective densities are
$$ \text{$ \frac{1}{\Gamma(c)}x^{c-1}e^{-x}\mathbf{1}_{(0, \infty)}(x) \quad $
and $\quad 
\frac{\Gamma(a+b)}{\Gamma(a)\Gamma(b)}x^{a-1}(1-x)^{b-1}\mathbf{1}_{(0, 1)}(x).$ }$$
Recall the formulas for the fractional moments:
    \begin{equation}
         \mathbb{E}(\G_{c}^s) = \frac{\Gamma(c+s)}{\Gamma(c)} \, ,  \, c > 0, \, \, s > -c\, ,
    \end{equation}
and
    \begin{equation}
         \mathbb{E}(\B_{a, b}^s) = \frac{\Gamma(a+b)}{\Gamma(a)}\frac{\Gamma(a+s)}{\Gamma(a+b+s)}, \, \, a > 0, \, b>0, \, s > -a.
    \end{equation}

We say that the above random variable $\X$ exists, if there exists a non-negative function $f$ such that 
\begin{equation}
\label{eq density f}
    \int_0^\infty x^s f(x) dx = C D^s  \frac{ \prod_{j = 1}^{J} \Gamma(A_j s + a_j)}{\prod_{k = 1}^{K}\Gamma(B_k s + b_k)}.
\end{equation}
Here, $f$ is automatically integrable because the right hand side of \eqref{eq density f} with $s=0$ is finite. 

We now recall a necessary condition for the existence of $\X$ due to Janson \cite{Jan10}. 
For convenience,  we denote $\Y := \log \X$. For $s$ in some interval, 
\begin{equation}\label{eq def moment generating function of Gamma type}
    \mathbb{E}(e^{s\Y}) = C D^s  \frac{ \prod_{j = 1}^{J} \Gamma(A_j s + a_j)}{\prod_{k = 1}^{K}\Gamma(B_k s + b_k)}.
\end{equation}
% We say that $\Y$ has moment generating function of Gamma type. 
By \cite[Theorem 2.1]{Jan10}, \eqref{eq def moment generating function of Gamma type} is equivalent to 
\begin{equation}\label{eq def characteristic function of Gamma type}
    \mathbb{E}(e^{it\Y}) = C e^{it \log D}  \frac{ \prod_{j = 1}^{J} \Gamma(i A_j t + a_j)}{\prod_{k = 1}^{K}\Gamma(i B_k t + b_k)}, \quad \text{for all real $t$}.
\end{equation}
%by analytic continuation. 
We know from \cite[Theorem 5.1]{Jan10} that
$$ |\mathbb{E}(e^{it\Y})|  \sim C_1 |t|^\delta e^{-\pi \gamma |t| /2},  \quad \text{as $t \rightarrow  \pm \infty$, }$$
where
\begin{equation}
    \label{eq def gamma beta}
    \text{$ \gamma = \sum_{j=1}^J |A_j| - \sum_{k=1}^K | B_k| \quad$ and $\quad \delta = \sum_{j=1}^J a_j - \sum_{k=1}^K b_k - \frac{J-K}{2}.$}
\end{equation}
Then the fact that $|\mathbb{E}(e^{it\Y})| \leq 1$ yields the following necessary condition.

\begin{proposition}\label{proposition Jan10 Coro 5.2}{\cite[Corollary 5.2]{Jan10}}
If $\X$ exists, then either $\gamma > 0$, or $\gamma = 0$ and $\delta \leq 0$.
\end{proposition}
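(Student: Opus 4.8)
The plan is to read the asymptotic estimate of the excerpt as a direct constraint on the modulus of the characteristic function and then invoke the universal bound $|\mathbb{E}(e^{it\Y})| \leq 1$, which holds for any characteristic function. Concretely, the excerpt already supplies
$$ |\mathbb{E}(e^{it\Y})| \sim C_1 |t|^\delta e^{-\pi \gamma |t|/2}, \quad \text{as } t \to \pm\infty, $$
with $\gamma$ and $\delta$ as in \eqref{eq def gamma beta}. So the whole proof reduces to asking: for which pairs $(\gamma, \delta)$ can a function that behaves like $C_1 |t|^\delta e^{-\pi\gamma|t|/2}$ stay bounded by $1$ as $|t| \to \infty$? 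First I would fix attention on the large-$|t|$ regime, since the asserted necessary condition is purely an asymptotic statement; boundedness on compact sets is automatic from continuity and the value $1$ at $t=0$.

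The key case analysis is short. If $\gamma > 0$, the exponential factor $e^{-\pi\gamma|t|/2}$ decays to $0$ faster than any power of $|t|$ grows, so $|\mathbb{E}(e^{it\Y})| \to 0$ regardless of the sign of $\delta$; hence $\gamma > 0$ is always admissible and imposes no further constraint. The substantive case is $\gamma = 0$: then the exponential factor is identically $1$ and the asymptotics collapse to $|\mathbb{E}(e^{it\Y})| \sim C_1 |t|^\delta$. If $\delta > 0$, this tends to $+\infty$, contradicting $|\mathbb{E}(e^{it\Y})| \leq 1$; therefore $\delta \leq 0$ is forced. The remaining possibility $\gamma < 0$ would make the exponential factor blow up and likewise violate the bound, so it is excluded. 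Collecting the cases gives exactly the dichotomy ``$\gamma > 0$, or $\gamma = 0$ and $\delta \leq 0$.''

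The main obstacle, such as it is, lies not in the case analysis but in justifying the use of the asymptotic formula. One should note that the symbol $\sim$ carries a positive constant $C_1$, so in the borderline subcase $\gamma = 0$, $\delta = 0$ the estimate only guarantees $|\mathbb{E}(e^{it\Y})| \to C_1$ rather than an immediate contradiction; I would observe that this is consistent with the bound $\leq 1$ (one simply needs $C_1 \leq 1$) and so $\delta = 0$ must be retained as admissible, which is why the condition reads $\delta \leq 0$ and not $\delta < 0$. The only genuine input beyond the cited asymptotics is the elementary fact that a characteristic function satisfies $|\mathbb{E}(e^{it\Y})| \leq 1$ for all real $t$, which follows from $|e^{it\Y}| = 1$ and Jensen's (or the triangle) inequality; this is precisely the ingredient the excerpt flags before stating the proposition. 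Thus the argument is essentially a one-line deduction once the Theorem~5.1 asymptotics are in hand.
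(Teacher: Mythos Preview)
Your argument is correct and matches the paper's approach exactly: the paper simply cites the asymptotic $|\mathbb{E}(e^{it\Y})| \sim C_1 |t|^\delta e^{-\pi \gamma |t|/2}$ from \cite[Theorem~5.1]{Jan10} and observes that $|\mathbb{E}(e^{it\Y})| \leq 1$ forces the stated dichotomy. Your case analysis spells out precisely the one-line deduction the paper leaves implicit.
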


Random variables having moments of Gamma type have also been studied in Chamayou and Letac \cite{CL99}, Dufresne \cite{Duf10a, Duf10b}, Młotkowski and Penson \cite{MP14}, Bosch \cite{Bos15}, Karp and Prilepkina \cite{KP16}, Kadankova, Simon and Wang \cite{KSW20}, Ferreira and Simon \cite{FS23}, and the references therein. We will review some previous results in Section \ref{Section previopus results}. 

In the present paper, we give some new necessary conditions and some sufficient conditions for the existence of $\X$ in the special case $J =2, K = 1, A_1 =1, A_2 = -1$. Our main result is the following theorem.

\begin{theorem}\label{thm main}
    For positive $a,b,c,d$, we consider the random variable $\X_{a,b,c,d}$ satisfying 
       \begin{equation}\label{def Xabcd}
        \mathbb{E}(\X_{a,b,c,d}^s) = \frac{ \Gamma(c) }{ \Gamma(a)\Gamma(b)} \frac{ \Gamma(a + s)\Gamma(b-s)}{ \Gamma(c+ d s) }, \quad -a < s < b.
    \end{equation} 
\begin{itemize}
    \item[(I)] $\X_{a,b,c,d}$ does not exist if any of the following conditions holds: 
    \begin{itemize}
        \item[(1)]  $d > 2$;
        \item[(2)]   $c < ad$;
        \item[(3)]  $3a + b > c, \; d=2$;
        \item[(4)] $1 < d \leq 2,\; a +b \geq 1, \;c < ad -d + f(d)$.
    \end{itemize}
   
    \item[(II)] $\X_{a,b,c,d}$ exists if any of the following conditions holds: 
    \begin{itemize}
        \item[(1)]  $d \leq 1, c \geq ad$;
        \item[(2)]  $1 < d \leq 2,\; a +b \leq 1, \;c \geq ad -d + f(d)$;
        \item[(3)] $1 < d \leq 2,\; \frac{2c}{d} \geq 3a+b, \;2(\frac{c}{d}-a)(\frac{c}{d}+\frac{1}{2}-a)\geq a+b$.
    \end{itemize}
\end{itemize}
 where $f$ is the function defined in Theorem \ref{Theorem admissible parameter domain}.  
\end{theorem}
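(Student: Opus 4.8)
The plan is to make explicit the putative density of $\X_{a,b,c,d}$ as a three-parametric (Prabhakar) Mittag-Leffler function and to reduce the entire statement to the non-negativity region of that function, after which the four non-existence clauses and three existence clauses are handled separately. Writing $f$ as the Mellin-Barnes integral $\frac{1}{2\pi i}\int M(s)\,x^{-s-1}\,ds$ with $M(s)=\frac{\Gamma(c)}{\Gamma(a)\Gamma(b)}\frac{\Gamma(a+s)\Gamma(b-s)}{\Gamma(c+ds)}$, I would collect the residues at the poles $s=b+n$ of $\Gamma(b-s)$ to obtain, for all $x>0$,
$$ f(x)=\frac{\Gamma(c)\Gamma(a+b)}{\Gamma(a)\Gamma(b)}\,x^{-b-1}\,E^{a+b}_{d,\,c+bd}\!\left(-\tfrac{1}{x}\right),\qquad E^{\gamma}_{\alpha,\beta}(z)=\sum_{n\ge0}\frac{(\gamma)_n}{\Gamma(\alpha n+\beta)}\frac{z^n}{n!}. $$
Since $E^{\gamma}_{\alpha,\beta}$ is entire for $\alpha=d>0$ and $M(0)=1$ forces $\int_0^\infty f=1$, this is the backbone: \emph{$\X_{a,b,c,d}$ exists if and only if $E^{a+b}_{d,\,c+bd}(-y)\ge0$ for all $y>0$}, with parameters $\alpha=d$, $\beta=c+bd$, $\gamma=a+b$. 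I would also record $\gamma=2-d$ and $\delta=a+b-c-\tfrac12$ in the notation of \eqref{eq def gamma beta}, so that Proposition \ref{proposition Jan10 Coro 5.2} applies directly.

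For part (I) the first two clauses are essentially free. Clause (1) is exactly $\gamma<0$ in Proposition \ref{proposition Jan10 Coro 5.2}. Clause (2) follows from positivity of fractional moments: if $c<ad$ then $-c/d>-a$, so for $s$ slightly below $-c/d$ the argument $c+ds$ is slightly below $0$, whence $\Gamma(c+ds)<0$ and the right-hand side of \eqref{def Xabcd} is negative; but $\EE(\X_{a,b,c,d}^s)>0$, a contradiction. Clause (4) is the quantitative non-existence: for $1<d\le2$ with $\gamma=a+b\ge1$, I would invoke Theorem \ref{Theorem admissible parameter domain}, whose admissibility boundary is precisely $c=ad-d+f(d)$ (equivalently $\beta=(\gamma-1)d+f(d)$), to conclude that $E^{a+b}_{d,\,c+bd}$ changes sign and hence $f\not\ge0$.

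Clause (3), the borderline $d=2$, is the delicate one and I expect it to be the main obstacle. Here $\gamma=0$, so Proposition \ref{proposition Jan10 Coro 5.2} is inconclusive, and one must exploit the genuinely oscillatory nature of $E^{a+b}_{2,\,c+2b}$ — at $\alpha=2$ the Mittag-Leffler function is trigonometric in character, as $E_{2,1}(-y^2)=\cos y$ already shows. The plan is to extract a sign change either from the large-$y$ asymptotics of $E^{a+b}_{2,\,c+2b}(-y)$, or, after applying the Legendre duplication formula to rewrite $\Gamma(c+2s)=\tfrac{2^{c+2s-1}}{\sqrt\pi}\Gamma(\tfrac c2+s)\Gamma(\tfrac c2+\tfrac12+s)$ and recasting $M(s)$ in the $J=K=2$ Gamma form, from a careful analysis of the resulting oscillation; the goal is to show that $3a+b>c$ is incompatible with $f\ge0$. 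Controlling the phase/amplitude in this borderline regime precisely enough to locate a negative value is where the real work lies.

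For part (II) I would exhibit non-negative densities. Clause (1), $d\le1$ and $c\ge ad$, is the complete-monotonicity regime: $0<\alpha=d\le1$ and $\beta=c+bd\ge d(a+b)=\alpha\gamma$, which are exactly the classical conditions under which $E^{a+b}_{d,\,c+bd}(-y)$ is completely monotone on $(0,\infty)$, hence non-negative; via the backbone identity this gives existence (for $d=1$ one may alternatively display $\X_{a,b,c,1}$ as the independent product $\B_{a,c-a}/\G_b$). Clause (2) is the mirror image of (I)(4): for $1<d\le2$ with $\gamma=a+b\le1$, Theorem \ref{Theorem admissible parameter domain} now places $(\alpha,\beta,\gamma)$ on the admissible side $\beta\ge(\gamma-1)d+f(d)$, yielding $E^{a+b}_{d,\,c+bd}\ge0$. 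Finally clause (3) supplies a second sufficient region for $1<d\le2$: the duplication-flavoured inequalities $\tfrac{2c}{d}\ge3a+b$ and $2(\tfrac cd-a)(\tfrac cd+\tfrac12-a)\ge a+b$ are the hypotheses of a separate positivity criterion (obtained after splitting $\Gamma(c+ds)$ by the duplication formula and representing $f$ through non-negative Beta/Gamma factors), which I would verify covers parameter values not reached by clause (2).
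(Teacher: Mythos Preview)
Your backbone identity and the treatments of (I)(1), (I)(2), (II)(1) are correct; your (I)(2) via the sign of $\Gamma(c+ds)$ just below $s=-c/d$ is in fact cleaner than the paper's log-convexity argument, and your (II)(1) via complete monotonicity of $E^\gamma_{\rho,\mu}$ is a valid alternative to the paper's explicit factorizations $\X_{a,b,c,d}\elaw\M_{d,c-ad,a-1}\times\G_b^{-1}$ (for $d<1$) and $\B_{a,c-a}\times\G_b^{-1}$ (for $d=1$).

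The genuine gap is in (I)(4) and (II)(2). Theorem~\ref{Theorem admissible parameter domain} is a statement about the \emph{two}-parametric function $E_{\rho,\mu}=E^{1}_{\rho,\mu}$ only; it gives no direct information on $E^{\gamma}_{\rho,\mu}$ for $\gamma=a+b\ne1$, so your invocation of it with ``admissibility boundary $\beta=(\gamma-1)d+f(d)$'' is unjustified as stated. The paper bridges this by first settling the line $a+b=1$ exactly (there the density involves $E_{d,\,c+(1-a)d}$ and Theorem~\ref{Theorem admissible parameter domain} applies verbatim), and then transferring off that line via the Beta-product identities
\[
\X_{a,b,c,d}\elaw\X_{a,1-a,c,d}\times\B_{b,\,1-a-b}^{-1}\ \ (a+b<1),\qquad
\X_{a,1-a,c,d}\elaw\X_{a,b,c,d}\times\B_{1-a,\,a+b-1}^{-1}\ \ (a+b>1),
\]
the first yielding existence for $a+b<1$, the second (read contrapositively) yielding non-existence for $a+b>1$. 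This reduction is missing from your plan.

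For (I)(3) and (II)(3) your instinct to apply Legendre duplication is right, but you do not identify the decisive input. After duplication, $\X_{a,b,c,2}$ coincides (up to scale) with $D\!\left[\begin{smallmatrix}a & b\\ (c/2,\,(c+1)/2) & -\end{smallmatrix}\right]$ of \eqref{eq Mellin D}, and both clauses then follow from Theorem~\ref{Theorem KSW20 THM1} (Kadankova--Simon--Wang): part (b) gives non-existence when $\tfrac{c}{2}+\tfrac{c+1}{2}<3a+b+\tfrac12$, i.e.\ $c<3a+b$, and part (a) gives existence under the two inequalities of (II)(3). The extension of (II)(3) to $1<d<2$ uses one more factorization, $\X_{a,b,c,d}\elaw\X_{a,b,\,2c/d,\,2}\times\M_{d/2,\,0,\,2c/d-1}^{\,2}$. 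So the ``real work'' you anticipate in (I)(3) is in fact a citation; without Theorem~\ref{Theorem KSW20 THM1} your plan for these clauses remains incomplete.
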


Note that $\gamma = 2-d$ and $\delta = a + b - c - \frac{1}{2}$ for $\X_{a,b,c,d}$ defined by \eqref{def Xabcd}. 

\begin{remark}
\begin{itemize}
    \item[(i)] We only study the case $d > 0$ because $ \X_{a,b,c,d} \elaw \X_{b,a,c,-d}^{-1}. $
    \item[(ii)] The existence of $\X_{a,b,c,d}$ is still open for 
    $$1 < d \leq 2, \,\,\, a +b < 1, \,\,\, ad \leq c < ad +f(d) -d, $$
    and for
    $$ 1 < d < 2,\,\,\, a +b > 1, \,\,\, ad +f(d) -d  \leq c < \frac{d}{2}(3a+b).$$
\end{itemize}
\end{remark}

In the next two subsections, we give some applications of Theorem \ref{thm main}.

\subsection{Three-parametric Mittag-Leffler function}
From the point of view of special functions, the density of $\X_{a,b,c,d}$ can be expressed by the three-parametric Mittag-Leffler function. 

The classical Mittag-Leffler function is the entire function 
\begin{equation*}
    E_{\rho}(z) := \sum_{n\geq 0} \frac{z^n}{\Gamma(1+\rho n)}, \quad z \in \mathbb{C},\, \rho >0.
\end{equation*}
It was introduced by Gosta Mittag-Leffler in 1903. Wiman introduced the two-parametric Mittag-Leffler function in 1905 and defined it as
\begin{equation*}
    E_{\rho, \mu}(z) := \sum_{n\geq 0} \frac{z^n}{\Gamma(\mu +\rho n)}, \quad z \in \mathbb{C},\, \rho, \mu >0.
\end{equation*}
Let $\rho >0$, $\mu > 0$ and $\gamma > 0$, the three-parametric Mittag-Leffler function is defined as 
\begin{equation*}
    E_{\rho, \mu}^\gamma (z) := \sum_{n\geq 0}   \frac{\Gamma(\gamma + n)}{\Gamma(\gamma)\Gamma(1 + n)\Gamma(\mu +\rho n)} z^n, \quad z \in \mathbb{C}.
\end{equation*}
It can also be represented via the Mellin-Barnes integral:
\begin{equation}\label{eq three-parametric ML}
    E_{\rho,\mu}^\gamma (z) = \frac{1}{2\pi i}\int_L \frac{\Gamma(s)\Gamma(\gamma-s)}{\Gamma(\gamma)\Gamma(\mu-\rho s)} (-z)^{-s}ds,
\end{equation}
where $|\text{arg} \, z|< \pi$, the contour of integration begins at $c-i\infty$, ends at $c+i\infty$, $0 < c < \gamma$, and separates all poles of the integrand at $s = -k,  k = 0, 1, 2, \cdots$ to the left, and all poles at $s = n +\gamma, n=0,1, \cdots$ to the right. Applying the Mellin inversion formula to \eqref{eq three-parametric ML} we obtain, for $ 0 < s < \gamma $,
\begin{equation}\label{eq mellin three-parametric ML}
    \int_0^\infty t^{s-1} E_{\rho,\mu}^\gamma (-t)dt = \frac{\Gamma(s)\Gamma(\gamma-s)}{\Gamma(\gamma)\Gamma(\mu-\rho s)}.
\end{equation}
Therefore, the density of $\X_{a,b,c,d}^{-1}$ ($a,b,c,d >0$) is proportional to $t^{b-1}E_{d, c + bd}^{a+b}(-t)$, see \eqref{eq mellin X inverse} for details.
We then deduce the non-negativity of the three-parametric Mittag-Leffler function in some cases. 
\begin{corollary}\label{coro nonnegativity of three-parametric ML}
Let $ \rho > 0, \mu > 0$, and $\gamma > 0$, $f$ is the function defined in Theorem \ref{Theorem admissible parameter domain}.
\begin{itemize}
    \item[(I)] $ E_{\rho,\mu}^\gamma (z) $ takes negative values on the negative half line if any of the following conditions holds:
    \begin{itemize}
        \item[(1)]  $\rho > 2$;
        \item[(2)]   $\mu < \gamma \rho$;
        \item[(3)]  $\rho = 2, \;\mu < 3 \gamma$;
        \item[(4)] $1 < \rho \leq 2, \; \gamma \geq 1,\; \mu < \gamma \rho + f(\rho)-\rho$.
    \end{itemize}
   
    \item[(II)]  $ E_{\rho,\mu}^\gamma (z) $ is non-negative on the real line if any of the following conditions holds: 
    \begin{itemize}
        \item[(1)]  $\rho \leq 1, \mu \geq \rho \gamma$;
        \item[(2)]  $1 < \rho \leq 2, \; \gamma \leq 1, \; \mu \geq \gamma \rho + f(\rho)-\rho $;
        \item[(3)] $1 < \rho \leq 2, \; 2\mu \geq 3\gamma \rho, \;  2(\frac{\mu}{\rho}-\gamma)(\frac{\mu}{\rho}-\gamma +\frac{1}{2}) \geq  \gamma$.
    \end{itemize}
\end{itemize}
\end{corollary}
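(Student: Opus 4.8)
The plan is to reduce the Corollary entirely to Theorem~\ref{thm main} by using the Mellin identity \eqref{eq mellin three-parametric ML} to identify a positive multiple of $t^{b-1}E_{\rho,\mu}^\gamma(-t)$ as the \emph{only} candidate density for $\X_{a,b,c,d}^{-1}$. Since $\X_{a,b,c,d}$ exists if and only if $\X_{a,b,c,d}^{-1}$ does — the reciprocal map sends a density $f(x)$ to $t^{-2}f(1/t)$, which preserves non-negativity — the question reduces to deciding when this single candidate is non-negative.

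First I would pin down the dictionary between the two parametrizations. Writing $g$ for the putative density of $\X_{a,b,c,d}^{-1}$, its Mellin transform is $\mathbb{E}(\X_{a,b,c,d}^{-s}) = \tfrac{\Gamma(c)}{\Gamma(a)\Gamma(b)}\tfrac{\Gamma(a-s)\Gamma(b+s)}{\Gamma(c-ds)}$. Replacing $s$ by $s+b$ in \eqref{eq mellin three-parametric ML} and matching the Gamma factors forces
\[
\rho = d, \qquad \gamma = a+b, \qquad \mu = c+bd,
\]
so that $g(t) = \tfrac{\Gamma(a+b)\Gamma(c)}{\Gamma(a)\Gamma(b)}\, t^{b-1}E_{d,\,c+bd}^{\,a+b}(-t)$. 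Conversely, given $\rho,\mu,\gamma>0$ I set $d=\rho$ and choose any $b$ with $0<b<\min(\gamma,\mu/\rho)$; then $a=\gamma-b$ and $c=\mu-b\rho$ are positive, and this interval is always non-empty, so a legitimate quadruple with all entries positive always exists. Because $t^{b-1}>0$ on $(0,\infty)$ and the prefactor is positive, $g\geq 0$ is equivalent to $E_{\rho,\mu}^\gamma(-t)\geq 0$ for all $t>0$, i.e.\ to non-negativity of $E_{\rho,\mu}^\gamma$ on the negative half-line. Since for $z\geq 0$ the defining series of $E_{\rho,\mu}^\gamma$ has only non-negative terms (using $\rho,\mu,\gamma>0$), positivity on $[0,\infty)$ is automatic, and non-negativity on the negative half-line is the same as non-negativity on all of $\RR$. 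Combining these equivalences, $\X_{a,b,c,d}$ exists iff $E_{\rho,\mu}^\gamma\geq 0$ on $\RR$, while $\X_{a,b,c,d}$ fails to exist iff $E_{\rho,\mu}^\gamma$ takes a negative value on the negative half-line.

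It then remains to verify that, under $\rho=d,\ \gamma=a+b,\ \mu=c+bd$ (and $f(d)=f(\rho)$, since $d=\rho$), each hypothesis of the Corollary translates into the identically numbered hypothesis of Theorem~\ref{thm main}, with the free parameter $b$ disappearing from the final inequality. This is routine: for example $c<ad$ becomes $\mu-b\rho<(\gamma-b)\rho$, i.e.\ $\mu<\gamma\rho$; the combination $3a+b$ becomes $3\gamma-2b$ while $2c/d$ becomes $2\mu/\rho-2b$, so $2c/d\geq 3a+b$ collapses to $2\mu\geq 3\gamma\rho$; and the two factors in II(3) become $\mu/\rho-\gamma$ and $\mu/\rho-\gamma+\tfrac12$, with $a+b=\gamma$. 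In each case the $b$-terms cancel, leaving exactly the stated conditions. The one genuinely delicate point — which I expect to be the main obstacle, rather than the cited existence results of Theorem~\ref{thm main} or the elementary algebra — is the equivalence ``random variable exists $\iff$ candidate density non-negative'': it relies on uniqueness of the Mellin transform, so that $t^{b-1}E_{\rho,\mu}^\gamma(-t)$ is not merely \emph{a} function with the prescribed moments but \emph{the} function, and hence non-existence genuinely forces it below zero rather than indicating that a different non-negative representative was missed. Granting this, parts (I) and (II) of the Corollary follow term by term from parts (I) and (II) of Theorem~\ref{thm main}.
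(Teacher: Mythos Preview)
Your proposal is correct and follows essentially the same route as the paper. The paper establishes the equivalence \eqref{fact equiv}---that $\X_{a,b,c,d}$ exists iff $E_{d,c+bd}^{a+b}(-t)\ge 0$ on $(0,\infty)$---via the Mellin identity \eqref{eq mellin X inverse}, and the Corollary is then read off from Theorem~\ref{thm main} through the same dictionary $\rho=d$, $\gamma=a+b$, $\mu=c+bd$ that you set up; your explicit check that the free parameter $b$ cancels in every condition, and your remark on Mellin uniqueness, make rigorous what the paper leaves implicit.
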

Recall that a non-negative function $f$ on $(0, \infty)$ is called completely
monotonic if it has derivatives of all orders and
$ (-1)^n f^{(n)}(x) \geq 0$
 for $n \geq 1$
and $x > 0$.  The complete monotonicity of the three-parametric Mittag-Leffler function has been well-studied, see e.g. \cite{GHLP21,Sib23}, it is proved that if $0 < \rho \leq 1$ and $0 < \gamma \rho \leq \mu$, then $ E_{\rho,\mu}^\gamma (- z) $ is completely
monotonic. However, we did not find reference on the non-negativity of the three-parametric Mittag-Leffler function.

%By the celebrated Bernstein theorem, a function is completely monotonic if and only if it is the Laplace transform of a non-negative measure.

\subsection{Powers of half \texorpdfstring{$\a$}{}-Cauchy variable}
\label{sec alpha cauchy}
Finally, we investigate the infinite divisibility of the powers of half  $\a$-Cauchy variable. The main tools are Theorem \ref{thm main} and  the Kristiansen's Theorem:
\begin{theorem}[Kristiansen \cite{Kri94}]
    \label{Kristiansen's Theorem}
    The independent product $\W \times \G_2 $ is ID for any non-negative random variable $\W$.
\end{theorem}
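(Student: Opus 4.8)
The plan is to reduce the statement to the Lévy–Khintchine criterion for infinite divisibility of positive variables and then establish one complete-monotonicity fact. Conditioning on $\W = w$, the variable $w\G_2$ has Laplace transform $(1+ws)^{-2}$, so, writing $\sigma$ for the law of $\W$, the Laplace transform of $\X := \W \times \G_2$ is
\begin{equation*}
\mathcal{L}(s) = \mathbb{E}\big(e^{-s\X}\big) = \int_0^\infty (1+ws)^{-2}\,\sigma(dw), \qquad s \ge 0.
\end{equation*}
A random variable supported on $[0,\infty)$ is ID if and only if its Laplace exponent $\psi := -\log \mathcal{L}$ is a Bernstein function, equivalently if and only if $\psi' = -\mathcal{L}'/\mathcal{L}$ is completely monotone on $(0,\infty)$ (recall $\psi(0)=0$ and $\psi\ge 0$ automatically since $\mathcal{L}\le 1$). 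Thus the whole theorem reduces to proving that $-\mathcal{L}'/\mathcal{L}$ is completely monotone, for every choice of $\sigma$.

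I would next make two reductions. Since the class of ID laws is closed under weak convergence and $\W_n \claw \W$ forces $\W_n \times \G_2 \claw \W \times \G_2$ (the factor $\G_2$ being fixed and independent), it suffices to treat finitely supported $\sigma = \sum_{i=1}^n p_i\,\delta_{w_i}$. Then $\mathcal{L}(s) = \sum_i p_i (1+w_i s)^{-2}$ is a rational function and $-\mathcal{L}'/\mathcal{L}$ is rational as well, so one can work with explicit partial fractions. At this stage I would record the one cheap inequality: being the Laplace transform of a positive measure, $\mathcal{L}$ is log-convex (Cauchy–Schwarz), so $\psi'' = -(\log \mathcal{L})'' \le 0$. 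This is precisely the first of the infinitely many inequalities defining complete monotonicity, and it holds with no effort; the entire difficulty sits in the higher-order alternation.

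To establish full complete monotonicity I would aim to show that $\psi$ is in fact a complete Bernstein function, i.e. that it extends to a Pick (Nevanlinna) function on $\mathbb{C}\setminus(-\infty,0]$ that is nonnegative on $(0,\infty)$; equivalently, in the rational case, that $-\mathcal{L}'/\mathcal{L}$ admits a Stieltjes-type expansion $\sum_j c_j/(s+\lambda_j)$ with all $c_j>0$ and $\lambda_j>0$. This in turn reduces to a zero-localization statement: all zeros of the polynomial $P(s) = \sum_{i=1}^n p_i \prod_{j\ne i}(1+w_j s)^2$ must lie on the negative real axis and must interlace the double poles of $\mathcal{L}$ at $s = -1/w_i$ in such a way that the residues of $-\mathcal{L}'/\mathcal{L}$ come out positive. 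An alternative, and likely closer to Kristiansen's own route, is Steutel's integral-equation criterion: $\X$ is ID if and only if the canonical (Lévy) density $k$ solving $x f(x) = \int_0^x k(x-y) f(y)\,dy$ is nonnegative, where $f$ is the density of $\X$; using the explicit mixture form $f(x) = \int_0^\infty w^{-2} x\, e^{-x/w}\,\sigma(dw)$ one can try to produce $k$ and verify $k \ge 0$ directly.

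The hard part is exactly this complete-monotonicity / zero-localization / positivity-of-$k$ step, which is what made the statement Steutel's conjecture in the first place. Any correct argument must make essential use of the exponent being precisely $2$: the density of $\G_c$ is hyperbolically completely monotone for every $c>0$ (indeed $f(uv)f(u/v) = u^{2(c-1)} e^{-u(v+1/v)}$ is completely monotone in $v + 1/v$), yet the product $\W \times \G_c$ need not be ID for all $\W$ once the shape exceeds $2$, so the value $2$ is critical and a soft HCM/GGC argument cannot suffice. Concretely, I expect the delicate point to be controlling the interlacing of the zeros of $P$ with the poles $-1/w_i$ (first route), or the sign of the canonical density $k$ near the origin (second route), where it is the quadratic factors $(1+w_i s)^2$—and not higher powers—that keep the relevant residues, respectively the density $k$, nonnegative.
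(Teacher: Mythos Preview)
The paper does not prove this theorem; it merely quotes it from Kristiansen \cite{Kri94} as a black-box tool and then applies it in the proof of Theorem~\ref{Theorem ID of half alpha Cauchy}. So there is nothing to compare your proposal against.

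As for your write-up itself: it is an outline of possible strategies, not a proof. You correctly set up the reduction to showing that $-\mathcal{L}'/\mathcal{L}$ is completely monotone (equivalently that $\psi=-\log\mathcal{L}$ is Bernstein), and the finite-mixture reduction by weak-closure of ID is fine. But you then explicitly leave the decisive step --- the zero-interlacing for the rational $\mathcal{L}$, or equivalently the nonnegativity of the canonical density $k$ in Steutel's integral equation --- as ``the hard part'', without carrying it out. That step is the entire content of Kristiansen's paper (and is what made Steutel's conjecture stand for decades), so as written your proposal has a genuine gap precisely at the point where the real work begins. Your remark that the exponent $2$ is critical and that a soft HCM/GGC argument cannot close the gap is correct and shows good understanding of where the difficulty lies, but it does not substitute for the argument.
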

As a generalization of 
the standard Cauchy distribution $\frac{1}{\pi} \frac{1}{1 + x^2}$, the $\alpha$-Cauchy distribution, denoted by $\mathcal{C}_\a$, with density
\begin{equation}\label{eq density alpha Cauchy}
    f_{\mathcal{C}_\a}(t) = \frac{\sin(\pi/\a)}{2\pi/\a}\frac{1}{1+|t|^\a}, \quad \alpha > 1, \,\, t \in \mathbb{R}, 
\end{equation}
was introduced by
Yano, Yano and Yor \cite{YYY09}. 
 They proposed three open questions (\cite[Remark 2.9]{YYY09}):
\begin{enumerate}
    \item Is $\mathcal{C}_\a$ SD (or ID at least)?
    \item  Is $|\mathcal{C}_\a|$  SD (or ID at least)?
    \item Is $|\mathcal{C}_\a|^{-p}$ SD (or ID at least) for $p > 0$?
\end{enumerate}
 There are very few results about these questions. Bondesson \cite{Bon87} proved that $ |\mathcal{C}_\a|$ is ID for $1 < \a \leq 2$. Di\'edhiou \cite{Die98} proved that $ |\mathcal{C}_2|$ is SD. In the present paper, we establish the infinite divisibility of the powers of $|\mathcal{C}_\a|$ in many cases, which extend Bondesson's theorem.

\begin{theorem}\label{Theorem ID of half alpha Cauchy}
The random variable $|\mathcal{C}_\a |^{ \varepsilon p}, \varepsilon = \pm 1,$ is ID if 
\begin{equation}\label{eq range p}
p \geq 
    \begin{cases}
   (\a + 1)/3 ,& \quad \varepsilon = 1, \, 1 < \a \leq 2. \\
     \a/2 ,& \quad \varepsilon = 1, \, \a > 2. \\
     \a/2 ,& \quad \varepsilon = - 1, \, 1 < \a \leq 2. \\
     (2\a - 1)/3 ,& \quad \varepsilon = -1, \, \a > 2. \\
    \end{cases}
\end{equation}
\end{theorem}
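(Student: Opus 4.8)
The plan is to realize each power $|\mathcal{C}_\a|^{\eps p}$ as an independent product $\W\times\G_2$ and then apply Kristiansen's Theorem \ref{Kristiansen's Theorem}, the non-negative factor $\W$ being manufactured by the existence part of Theorem \ref{thm main}. First I would compute the Mellin transform of the half $\a$-Cauchy law: the substitution $u=t^\a$ together with $\int_0^\infty u^{q-1}(1+u)^{-1}\,du=\Gamma(q)\Gamma(1-q)$ gives, on the strip $-1<\mathrm{Re}\,s<\a-1$,
\[
\mathbb{E}\big(|\mathcal{C}_\a|^{s}\big)=\frac{\sin(\pi/\a)}{\pi}\,\Gamma\!\Big(\tfrac1\a+\tfrac{s}{\a}\Big)\,\Gamma\!\Big(1-\tfrac1\a-\tfrac{s}{\a}\Big).
\]
Replacing $s$ by $\pm ps$ and dividing by $\Gamma(2+s)$ then matches the template \eqref{def Xabcd}: with $c=2$, $d=\a/p$, and $(a,b)=(\tfrac1\a,\tfrac{\a-1}{\a})$ for $\eps=+1$ (respectively $(a,b)=(\tfrac{\a-1}{\a},\tfrac1\a)$ for $\eps=-1$, the swap reflecting Remark (i)), one checks directly that
\[
\mathbb{E}\big((|\mathcal{C}_\a|^{\eps p})^{s}\big)=\mathbb{E}\big(\X_{a,b,c,d}^{\,s/d}\big)\,\Gamma(2+s),
\]
the normalizing constant $\Gamma(c)/(\Gamma(a)\Gamma(b))$ reproducing exactly $\sin(\pi/\a)/\pi$.

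Second, assuming $\X_{a,b,c,d}$ exists (as a density, via Theorem \ref{thm main}), set $\W:=\X_{a,b,c,d}^{1/d}$, a genuine positive variable. Taking $\G_2$ independent of $\W$, the displayed identity reads $\mathbb{E}((\W\times\G_2)^s)=\mathbb{E}((|\mathcal{C}_\a|^{\eps p})^s)$ on an open $s$-interval. Because laws with moments of Gamma type are moment-determinate—the distribution of the logarithm has the characteristic function \eqref{eq def characteristic function of Gamma type}, analytic in a strip and hence pinning down the law—this forces $|\mathcal{C}_\a|^{\eps p}\elaw \W\times\G_2$, and Kristiansen's Theorem \ref{Kristiansen's Theorem} makes the right-hand side ID. Thus the entire problem reduces to verifying, in the four regimes, that $\X_{a,b,c,d}$ with $a+b=1$, $c=2$, $d=\a/p$ exists; note that $d\le 2$ is equivalent to $p\ge\a/2$, and $d\le1$ to $p\ge\a$.

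Third, I would dispatch the regimes using part (II) of Theorem \ref{thm main}. For $p\ge\a$ (so $d\le1$) condition (II)(1) applies immediately, since $c=2>1>ad$. For $1<d\le2$ in the two ``even'' cases, $(\eps=+1,\ \a>2)$ and $(\eps=-1,\ 1<\a\le2)$, I expect the explicit criterion (II)(3) to suffice: substituting $c/d=2p/\a$ and the relevant $(a,b)$, both inequalities of (II)(3) reduce, after factoring, to lower bounds on $p$ that are dominated by $\a/2$, so they hold throughout $p\ge\a/2$ (the boundary $\a=2$, resp.\ $d=2$, producing the expected equalities). This already yields the stated ranges $p\ge\a/2$ for these two cases, and, combined with (II)(1), covers all $p\ge\a/2$.

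The main obstacle is the two ``odd'' cases, $(\eps=+1,\ 1<\a\le2)$ with threshold $(\a+1)/3$ and $(\eps=-1,\ \a>2)$ with threshold $(2\a-1)/3$. Here $a+b=1$ sits exactly on the boundary between (II)(2) and (I)(4), and one computes that the explicit criterion (II)(3) would only reach down to the strictly larger value $p\ge(\a+2)/4$ (resp.\ $p\ge(3\a-2)/4$); attaining the claimed thresholds therefore forces the finer criterion (II)(2), namely $c\ge ad-d+f(d)$, governed by the function $f$ of Theorem \ref{Theorem admissible parameter domain}. The crux is thus to show, from the explicit description of $f$, that the equation $2=ad-d+f(d)$ has its solution precisely at $d=3\a/(\a+1)$ (resp.\ $d=3\a/(2\a-1)$), i.e.\ at $p=(\a+1)/3$ (resp.\ $(2\a-1)/3$), and—since $d=\a/p$ decreases as $p$ grows—that $ad-d+f(d)$ is monotone in $d$, so that the inequality, once valid at the threshold, persists for all larger $p$ until $d\le1$ hands the argument back to (II)(1). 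Establishing these two facts is the delicate step; everything else is the bookkeeping above.
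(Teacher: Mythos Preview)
Your factorization $|\mathcal{C}_\a|^{\eps p}\elaw \X_{a,b,2,\a/p}^{\,p/\a}\times\G_2$ is exactly the paper's (their parameter $\mu$ is chosen so that the Gamma index equals $2$), and the treatment of the two ``even'' cases via (II)(3) is correct---I checked that both inequalities of (II)(3) reduce to conditions dominated by $p\ge\a/2$ precisely when $\a\ge2$ (for $\eps=+1$) or $\a\le2$ (for $\eps=-1$). The paper instead handles all four cases uniformly through Proposition~\ref{proposition main 2}(a), but your route there is fine.

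The gap is in the ``odd'' cases. You propose to show that $2=ad-d+f(d)$ is solved \emph{precisely} at $d=3\a/(\a+1)$ (resp.\ $3\a/(2\a-1)$) and then invoke monotonicity. This cannot be carried out: Theorem~\ref{Theorem admissible parameter domain} does not give $f$ explicitly, only the sandwich $L<f<U$, so the equation $2=ad-d+f(d)$ is not solvable from the available information. In fact your ``precisely'' is literally false: at $d=3\a/(\a+1)$ one computes $ad-d+U(d)=2$ using the branch $U(d)=2d-1$, and since $f(d)<U(d)$ strictly, the true solution of $2=ad-d+f(d)$ lies at a \emph{larger} $d$, hence a \emph{smaller} $p$---the theorem's thresholds are not sharp for $f$. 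The paper's move (which you are missing) is simply to replace $f$ by its explicit upper bound $U$: the sufficient condition $2\ge ad-d+U(d)$, with $U(d)=2d-1$ on $3/2\le d<2$, rearranges directly to $d\le 3\a/(\a+1)$ (resp.\ $d\le 3\a/(2\a-1)$), i.e.\ to $p\ge(\a+1)/3$ (resp.\ $(2\a-1)/3$). No monotonicity argument for $f$ is needed; the inequality is checked pointwise in $d$ via $U$.
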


The rest of this paper is organized as follows. In Section \ref{Section previopus results}, we review some previous results on the existence of random variables having moments of Gamma type. In section \ref{section moment problem}, we discuss the connection between moments of Gamma type and Hausdorff moment problem. In Section \ref{Section proof}, we prove our results.

\section{Previous results on moments of Gamma type}
\label{Section previopus results}
\subsection{Some special cases}
The problem of the existence of $\X$ has been investigated in some special cases.  
\begin{enumerate}
    \item Młotkowski and Penson \cite{MP14} proved, for $\a \in (0,1)$ and real $r$, the random variable $\Y_{\a,r}$ with fractional moments 
\begin{equation}
\label{eq mellin Y alpha r}
    \mathbb{E}[\Y_{\a,r}^s] = \frac{\Gamma(r+1+s)}{\Gamma(1+\a s)\Gamma(r+1+(1-\a)s)}  , \quad s > -r.
\end{equation}
exists if and only if $r \in (-1, -1+1/\a]$. 
\item Ferreira and Simon \cite{FS23} proved, for real $\a$ and $\beta$, the random variable $\M_{\a,\beta}$ with fractional moments 
\begin{equation}
\label{eq mellin M alpha beta}
    \mathbb{E}[\M_{\a,\beta}^s] = \Gamma(\a+\beta)\frac{\Gamma(1+s)}{\Gamma(\a+\beta+\a s)}, \quad s > -1,
\end{equation}
exists if and only if $\a \in [0,1], \beta \geq 0$. For $\a = 1, \beta > 0, \M_{1,\beta} \elaw \B_{1, \beta}$. For $\a \in [0, 1), \beta \geq 0$, the density of $\M_{\a,\beta}$ is $\Gamma(\a +\beta) \phi(-\a, \beta, -x)$, where 
$$ \phi(\a, \beta, z) = \sum_{n\geq 0} \frac{z^n}{n! \Gamma(\beta+\a n) }, \quad \beta, z \in \mathbb{C}, \,\, \a > -1, $$
is the Wright function. The unimodality and the log-concavity of the Wright function have been studied in \cite{FS23}. For later use, we introduce the random variable $\M_{\a,\beta, t}$, $t > -1, \a \in [0,1), \beta \geq 0$, whose density is $\frac{\Gamma(\a(1+t)+\beta)}{\Gamma(1+t)} x^{t} \phi(-\a, \beta, -x)$. By direct computation and \eqref{eq mellin M alpha beta}, its fractional moments is 
\begin{equation}
\label{eq mellin M alpha beta t}
    \mathbb{E}[\M_{\a,\beta, t}^s] = \frac{\Gamma(\a(1+t)+\beta)}{\Gamma(1+t)}\frac{\Gamma(1+t+s)}{\Gamma(\a(1+t)+\beta+\a s)}, \quad s > -1-t. 
\end{equation}

\item Dufresne \cite[Theorem 1]{Duf10a} proved, for positive $a,c$ and real $b,d$, the random variable $B^{(a,b,c,d)}$ with fractional moments 
\begin{equation}
\label{eq B abcd}
    \mathbb{E}[(B^{(a,b,c,d)})^s] = \frac{\Gamma(a+b)\Gamma(c+d)}{\Gamma(a)\Gamma(a)}\frac{\Gamma(a+s)\Gamma(c+s)}{\Gamma(a+b+s)\Gamma(c+d+s)}, \quad s > -\min (a,c),
\end{equation}
exists if and only if $b +d \geq 0$ and $\min (a,c) \leq \min (a+b, c+d)$.  In the case $b + d = 0$, $B^{(a,b,c,d)}$ has a point mass $$\frac{\Gamma(a+b)\Gamma(c+d)}{\Gamma(a)\Gamma(c)}$$ at one, see Dufresne \cite[Theorem 6.2]{Duf10b}. 
\end{enumerate}

There are very few results when $A_j A_{j'} < 0$ for some $j, j'$. 

\begin{enumerate}
    \item[(4)] Kadankova, Simon and Wang \cite{KSW20} studied the special case $J = K = 2, A_1 = -A_2  = B_1 = B_2 = 1$. Their Theorems 1 and 2 can be reformulated as the following theorem.
\begin{theorem}\label{Theorem KSW20 THM1}\cite{KSW20}
For every $a, b, c, d > 0$, the distribution 
$$ D\left[ \begin{array}{cc}
    a & b \\
    (c,d) & -
\end{array}  \right]  \elaw D\left[ \begin{array}{cc}
    a & b \\
    (d,c) & -
\end{array}  \right] $$
satisfying
\begin{equation}\label{eq Mellin D}
    \mathbb{E} \left(  D\left[ \begin{array}{cc}
    a & b \\
    (c,d) & -
\end{array}  \right] ^s \right) = \frac{\Gamma(c)\Gamma(d)}{\Gamma(a)\Gamma(b)} \frac{\Gamma(a+s)\Gamma(b-s)}{\Gamma(c+s)\Gamma(d+s)}
\end{equation}

\begin{itemize}
    \item[(a)] exists if $c > a, d > a, c + d \geq 3a + b +\frac{1}{2}$ and $2(c-a)(d-a) \geq a+b$. 
    \item[(b)] does not exist if $c + d < 3a + b +\frac{1}{2}$ or $\min (c,d) \leq a$. 
\end{itemize}
\end{theorem}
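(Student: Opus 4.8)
The plan is to realize the law of the variable $D$ defined by \eqref{eq Mellin D} through Mellin inversion and to reduce both assertions to the non-negativity of the resulting density. The two elementary identities $\mathbb{E}((\G_a/\G_b)^s)=\Gamma(a+s)\Gamma(b-s)/(\Gamma(a)\Gamma(b))$ and $\mathbb{E}((\G_c\,\G_d)^s)=\Gamma(c+s)\Gamma(d+s)/(\Gamma(c)\Gamma(d))$ (all factors independent) show that, whenever $D$ exists, $D\cdot\G_c\cdot\G_d\elaw \G_a/\G_b$; thus existence is exactly the divisibility of the ratio $\G_a/\G_b$ by the independent product $\G_c\,\G_d$ in the multiplicative-convolution algebra. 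Two general necessary conditions come for free: the log-convexity of $s\mapsto \log\mathbb{E}(D^s)$ on $(-a,b)$ (Cauchy--Schwarz for any positive law), which reads $\psi'(a+s)+\psi'(b-s)\ge \psi'(c+s)+\psi'(d+s)$ with $\psi'$ the trigamma function, and Janson's tail bound (Proposition \ref{proposition Jan10 Coro 5.2}), which here yields only $c+d\ge a+b$ since $\gamma=0$ and $\delta=a+b-c-d$.

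For part (b), I would compute the density by Mellin inversion, $f(x)=\frac{1}{2\pi i}\int \mathbb{E}(D^s)\,x^{-s-1}\,ds$, and expand it in residues. Closing the contour to the left picks up the poles of $\Gamma(a+s)$ at $s=-a-n$ and produces a Fox--Wright series whose leading small-$x$ coefficient is proportional to $1/(\Gamma(c-a)\Gamma(d-a))$. When $\min(c,d)\le a$ this coefficient vanishes or changes sign (for instance it is negative once $c-a\in(-1,0)$ while $d>a$), forcing $f<0$ near the origin and ruling out existence; this disposes of the condition $\min(c,d)\le a$. The bound $c+d<3a+b+\tfrac12$ lies beyond Janson's modulus estimate, so I would extract it from the precise large-argument asymptotics of the balanced ($\gamma=0$) Mellin--Barnes integral: the confluence of the two denominator factors $\Gamma(c+s)\Gamma(d+s)$ produces a Bessel-type oscillatory term $\sim x^{\kappa}\cos(2\sqrt{x}+\varphi)$ whose amplitude has a definite sign unless $c+d\ge 3a+b+\tfrac12$, in which range it dominates the algebraically decaying terms and makes $f$ change sign. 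The trigamma inequality above may serve as an alternative, more elementary route to the same threshold.

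For part (a), where $c>a$ and $d>a$, I would build $D$ as an explicit independent product of non-negative variables. The classical beta--gamma algebra $\G_c\cdot\B_{a,c-a}\elaw\G_a$ (valid for $c>a$) cancels one Gamma factor in $D\cdot\G_c\cdot\G_d\elaw\G_a/\G_b$, reducing the problem to the removal of a single Gamma factor $\G_d$ from $\B_{a,c-a}/\G_b$. I would then identify the residual law with a Wright/Mittag-Leffler-type density and invoke a known non-negativity (indeed complete-monotonicity) regime for such densities, in the same circle of ideas behind the laws $\M_{\a,\beta}$ of \eqref{eq mellin M alpha beta} and the three-parametric Mittag-Leffler function. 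The role of the two hypotheses $c+d\ge 3a+b+\tfrac12$ and $2(c-a)(d-a)\ge a+b$ is precisely to place the parameters of this residual density inside the regime where positivity holds: the first controls the order of decay (matching the asymptotic exponent found in part (b)), while the product condition controls the curvature needed for the Fox--Wright series to stay non-negative.

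The main obstacle is the sufficiency direction: pointwise non-negativity of a Fox--Wright / Meijer-$G$ density is not automatic, and the product inequality $2(c-a)(d-a)\ge a+b$ is delicate to match to an off-the-shelf positivity criterion, so the crux is to find the right product decomposition (or completely monotone representation) that turns the two algebraic inequalities into manifest positivity. On the non-existence side, the secondary difficulty is the sharp constant $3a+b+\tfrac12$: since $|\mathbb{E}(e^{it\Y})|\le1$ gives only $c+d\ge a+b$, one must track the leading oscillatory term of the balanced Mellin--Barnes integral, or push the trigamma convexity inequality to its extremal point, to reach the stated threshold.
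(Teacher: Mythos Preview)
The paper does not prove Theorem~\ref{Theorem KSW20 THM1}: it is quoted from Kadankova, Simon and Wang \cite{KSW20} (their Theorems~1 and~2, reformulated) and used as a black box in the proof of Proposition~\ref{proposition main 3}. There is therefore no proof in the present paper to compare your proposal against.

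On the proposal itself: it is a reasonable strategic outline, but the two steps you yourself flag as obstacles are precisely the substance of the theorem, and neither is carried out. For part~(b), the case $\min(c,d)<a$ is easier than your residue discussion suggests: the right-hand side of \eqref{eq Mellin D} vanishes at $s=-\min(c,d)\in(-a,b)$, which already contradicts strict positivity of $\mathbb{E}(D^s)$ on the interior of its strip (this is the argument used elsewhere in the paper, e.g.\ for $\F_{\a,1}$ and in Lemma~\ref{lemma existence Vacd}). The threshold $c+d<3a+b+\tfrac12$ is indeed not reachable from Janson's modulus bound; in \cite{KSW20} it comes from the exact large-$x$ asymptotics of the density, which involves a Bessel-type oscillation, much as you surmise, but you have not produced that computation. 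For part~(a), your beta--gamma reduction $\G_c\cdot\B_{a,c-a}\elaw\G_a$ removes only one denominator factor; the residual object $\B_{a,c-a}\,\G_b^{-1}/\G_d$ still carries the full difficulty, and you have not exhibited a factorisation or a positivity criterion that uses the two hypotheses $c+d\ge 3a+b+\tfrac12$ and $2(c-a)(d-a)\ge a+b$. In short, the proposal identifies the right phenomena (oscillatory asymptotics for non-existence, multiplicative factorisation for existence) but leaves the actual proofs to \cite{KSW20}.
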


\item[(5)] Bosch \cite{Bos15} proved, for $0 < \a < 1$ and $t > 0$, the random variable $ \F_{\a, t} $ with fractional moments
\begin{equation}
   \mathbb{E}[\F_{\a,t}^s] =\Gamma(t)\frac{\Gamma(1- s/t)\Gamma(1+s/t)}{\Gamma(1- \a s/t)\Gamma(1+\a s/t)\Gamma(t+s)}
\end{equation}
exists if $0 < \a \leq 1/2$ and $0 \leq t \leq 1-\a$. Bosch \cite[Proposition 4.1]{Bos15} also showed $\F_{\a,1}$ does not exist. Indeed, $\mathbb{E}[\F_{\a,1}^s]$ vanishes at $-1/\a$, which is impossible if $\F_{\a,1}$ exists. By the following identity in law
\begin{equation}
    \F_{\a,t}^{t/T} \elaw \F_{\a,T} \times \M_{t/T, 0, T-1}, \quad T > t > 0, \, 0 < \a < 1, 
\end{equation}
the existence of $\F_{\a,T}$ implies the existence of $\F_{\a,t}$ for all $t < T$, and the non-existence  of $\F_{\a,t}$ implies the non-existence of $\F_{\a,T}$ for all $T > t$. 
(Throughout, in any factorization of the type $X \elaw Y \times Z$, 
the random variables $Y, Z$ on the right-hand side will be assumed to be independent.) 
Therefore, for $0 < \a \leq 1/2$ there exists $t_\a \in [1-\a, 1)$ such that $ \F_{\a, t} $ exists if and only if $0 < t \leq t_\a$. But the exact value of $t_\a$ is still unknown. By our Theorem \ref{thm main}, we have $t_{1/2} = 1/2$. Indeed, 
\begin{equation}
   \mathbb{E}[\F_{1/2,t}^s] = \frac{\Gamma(t)}{\Gamma(1/2)\Gamma(1/2)}\frac{\Gamma(\frac{1}{2} - \frac{s}{2t})\Gamma(\frac{1}{2}+ \frac{s}{2t})}{\Gamma(t+s)}
\end{equation}
therefore, 
\begin{equation*}
    \F_{1/2,t} \elaw \X_{ 1/2,1/2, t, 2t }^{\frac{1}{2t}}.
\end{equation*}
\end{enumerate}

\subsection{Quasi-infinite divisibility}

It is obvious that $\X$ exists if $\Y := \log \X$ is infinitely divisible (ID), we refer to Bondesson \cite{Bon92}, Sato \cite{Sat99}, Steutel and Van Harn \cite{SH04} for various accounts on infinite divisibility, self-decomposability (SD), generalized Gamma convolutions (GGC) and hyperbolically completely monotone (HCM) densities. But the reverse direction does not hold, namely
\begin{equation}
    \label{non equivalence}
    \text{$\X$ exists  $\, \nRightarrow \,$ $\Y$ is ID}.
\end{equation}
For example (\cite[Remark (c), page 5]{KSW20}), if $\X = D\left[ \begin{array}{cc}
    a & b \\
    (2a+b,a + 1/2) & -
\end{array}  \right]$, see \eqref{eq Mellin D} for definition, then $\Y$ is not ID. If $\X$ is the random variable such that $J= K = 3, \,A_j = B_k = 1,\, a_j = j$, and $b_k$ satisfying 
$$ 1 + \frac{0.015}{1+p} - \frac{0.25}{2+p} + \frac{1}{3+p} = \frac{(b_1+p)(b_2+p)(b_3+p)}{(1+p)(2+p)(3+p)}, $$
$\X$ exists, but $\Y$ is not ID. See 
\cite[Example 2, page 2195]{Duf10b} for more details.

The characteristic function of $\Y$ is given by \eqref{eq def characteristic function of Gamma type}. 
The well-known Malmsten formula (see for instance Carmona, Petit and Yor \cite{CPY97}) for the Gamma function is as follows
\begin{equation}
    \label{eq Malmsten}
    \frac{\Gamma(z + a)}{\Gamma(a)} = \exp \lacc \psi(a)z + \int_0^\infty (e^{-zx} - 1 + zx)\frac{e^{-ax} dx}{x(1-e^{-x})}\racc, \,\, \text{Re}(z) > -\text{Re}(a), \, \text{Re}(a) > 0.
\end{equation}
For $A_j > 0, \, B_k > 0,\, a_j > 0,\, b_k >0$, a repeated use of the Malmsten formula entails that, $\Y$ is ID if and only if,
\begin{equation}
    \label{Condition Y ID}
    \sum_{j=1}^J \frac{e^{-t \, a_j/A_j}}{1- e^{-t/A_j}} -  \sum_{k=1}^K \frac{e^{-t \, b_k/B_k}}{1- e^{-t/B_k}} \geq 0, \quad \forall \, t > 0.
\end{equation}
 However, the condition \eqref{Condition Y ID} is not easy to verify. Karp and Prilepkina \cite[Theorem 5]{KP16} have provided some practical sufficient condition. By \eqref{non equivalence}, the sufficient condition \eqref{Condition Y ID} is not necessary. Shanbhag and Sreehari \cite[Theorem 2]{SS77} proved that $\log \G_a$ is SD for any $a > 0$. If $\X$ exists, then by comparing the Laplace transform we have
 \begin{equation}
     \Y + \sum_{k=1}^K B_k \log \G_{b_k} \elaw \sum_{j=1}^J A_j \log \G_{a_j}. 
 \end{equation}
Using the terminology in Lindner, Pan and Sato \cite{LPS18}, we have the equivalence 
\begin{equation}
    \label{equivalence QID}
    \text{$\X$ exists  $\,\Longleftrightarrow \,$ $\Y$ is quasi-infinite divisible (quasi-ID)}.
\end{equation}
%We note that $\log \B_{a,b}$ is ID for any $a, b > 0$ and there exists $\log \B_{a,b}$ which is not SD, see \cite[Remark 2]{SPS77}. 
However, characterizing the existence of quasi-ID distributions in a given framework is a hard question. 
It would be interesting to construct more quasi-ID distributions with the help of the Gamma function.

\section{Connections to Hausdorff moment problem}\label{section moment problem}
In this section, we discuss the connection between moments of Gamma type and Hausdorff moment problem. 
Roughly speaking, for a given sequence $(m_0, m_1, \cdots)$, moment problem asks for necessary and sufficient conditions for the existence of a random variable $X$ such that $\mathbb{E}(X^n) = m_n /m_0$, and we call it a moment sequence if such $X$ exists.  
There are three well-known moment problems: the Hamburger 
moment problem deals with all random variables taking values in the whole real line, the Stieltjes moment
problem deals with non-negative random variables, and the Hausdorff moment problem deals with random variables on compact real intervals, which is the one we consider here.
It is worth mentioning that a Stieltjes or a Hamburger moment problem may have infinitely many solutions if it is solvable (indeterminate moment problem) whereas a Hausdorff moment problem always has a unique solution if it is solvable (determinate moment problem). 

The support of $\X$ is $[0,D \prod_{j=0}^J A_j^{A_j} \prod_{k=0}^K B_k^{-B_k}]$ if $A_j > 0$ for all $j$ and $\gamma = 0$, see (23) of \cite{KP16}. Recall that $\gamma = \sum_{j=1}^J |A_j| - \sum_{k=1}^K | B_k|.$ In this case, the problem of the existence of $\X$ is equivalent to the Hausdorff moment problem of the sequence 
\begin{equation}\label{eq def mu_n}
   \mu_n = \frac{ \prod_{j = 1}^{J} \Gamma(A_j n + a_j)}{\prod_{k = 1}^{K}\Gamma(B_k n + b_k)}, \quad n \geq 0. 
\end{equation}
We give a family of sequences of factorial ratios which are Hausdorff moment sequences, and then use it to recover some examples in the literature. 
\begin{proposition}
\label{prop Hausdorff moment sequence}
    If $J = K$, $A_j = B_k = 1$ for all $j, k$, $0 \leq a_1 \leq a_2 \leq \cdots, \; 0 \leq b_1\leq b_2 \leq \cdots$, and $\sum_{j=1}^L a_j \leq \sum_{j=1}^L b_j$ for each $L = 1,2, \cdots, J$, then $ (\mu_n)_{n\geq 0}$ defined by \eqref{eq def mu_n} is a Hausdorff moment sequence. 
\end{proposition}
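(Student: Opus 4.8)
The plan is to avoid the Hausdorff moment problem head‑on and instead deduce the statement from the infinite divisibility of $\Y=\log\X$. Recall from the beginning of Section \ref{Section previopus results} that $\X$ exists whenever $\Y$ is ID, and recall the ID criterion \eqref{Condition Y ID}, obtained by repeated use of the Malmsten formula \eqref{eq Malmsten} and valid here since $A_j=B_k=1>0$ and (in the non‑degenerate range) $a_j,b_k>0$. With $J=K$ and $A_j=B_k=1$ the left‑hand side of \eqref{Condition Y ID} collapses to $\tfrac{1}{1-e^{-t}}\bigl(\sum_{j=1}^{J}e^{-t a_j}-\sum_{k=1}^{J}e^{-t b_k}\bigr)$, and since $1-e^{-t}>0$ for $t>0$ the criterion is equivalent to the single scalar inequality
\begin{equation*}
\sum_{j=1}^{J}e^{-t a_j}\ \ge\ \sum_{k=1}^{J}e^{-t b_k},\qquad t>0.
\end{equation*}
Thus the whole problem reduces to establishing this exponential–sum inequality from the partial‑sum hypothesis.

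This inequality is a majorization inequality for the convex decreasing function $x\mapsto e^{-tx}$, and I would make it self‑contained as follows. For fixed $t>0$ one has the representation $e^{-tx}=t^2\int_0^\infty (c-x)_+\,e^{-tc}\,dc$ for $x\ge 0$, so it suffices to prove $\sum_{j}(c-a_j)_+\ge\sum_{j}(c-b_j)_+$ for every $c\ge 0$. Writing $A_L=\sum_{i=1}^L a_i$ and $B_L=\sum_{i=1}^L b_i$ with the $a_i,b_i$ increasing, one has the elementary identity $\sum_{j}(c-a_j)_+=\max_{0\le L\le J}(Lc-A_L)$, and likewise for $b$. Since the hypothesis gives $A_L\le B_L$ for every $L$, we get $Lc-A_L\ge Lc-B_L$ termwise, hence $\max_L(Lc-A_L)\ge\max_L(Lc-B_L)$, i.e. $\sum_{j}(c-a_j)_+\ge\sum_{j}(c-b_j)_+$. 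Integrating against $t^2e^{-tc}\,dc$ yields the exponential–sum inequality, and therefore the ID of $\Y$.

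It then remains to convert the existence of $\X$ into the Hausdorff statement. Because $J=K$, we have $\gamma=\sum_j|A_j|-\sum_k|B_k|=0$ and every $A_j=1>0$, so by the support formula of Karp and Prilepkina \cite{KP16} the variable $\X$ is supported on the compact interval $[0,\prod_{j}A_j^{A_j}\prod_{k}B_k^{-B_k}]=[0,1]$. Consequently $(\mathbb{E}(\X^n))_{n\ge 0}=(\mu_n/\mu_0)_{n\ge 0}$ is the moment sequence of a probability measure on $[0,1]$, which is exactly the assertion that $(\mu_n)_{n\ge 0}$ defined in \eqref{eq def mu_n} is a Hausdorff moment sequence; note this even shows the stronger fact that $\log\X$ is ID, not merely quasi‑ID. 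The one point needing care is the boundary of the parameter range: if some $a_j=0$ then $\mu_0=\infty$ and if some $b_k=0$ then $\mu_0=0$, so the non‑degenerate statement requires $a_j,b_k>0$, and these edge cases are recovered by continuity in the parameters. The main, and essentially only, obstacle is the scalar inequality of the first paragraph; once it is recognized as the reduction of \eqref{Condition Y ID}, the remainder is bookkeeping with the ID‑to‑existence implication and the compact‑support formula.
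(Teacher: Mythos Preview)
Your proof is correct and follows essentially the same route as the paper: both verify the ID criterion \eqref{Condition Y ID} for $\Y=\log\X$ and then use the compact-support formula from \cite{KP16} (with $\gamma=0$, $A_j=B_k=1$) to conclude that $(\mu_n)$ is a Hausdorff moment sequence. The only difference is that where the paper cites \cite[Theorem~5(c)]{KP16} to pass from the partial-sum hypothesis to \eqref{Condition Y ID}, you supply a self-contained majorization argument via the identity $e^{-tx}=t^{2}\int_{0}^{\infty}(c-x)_{+}\,e^{-tc}\,dc$ and the formula $\sum_{j}(c-a_j)_{+}=\max_{0\le L\le J}(Lc-A_L)$, which makes the proof independent of that reference; your remark on the degenerate boundary $a_j=0$ or $b_k=0$ is also more explicit than the paper's treatment.
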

This proposition is a direct consequence of \cite[Theorem 5 (c)]{KP16}, \eqref{Condition Y ID} and \eqref{equivalence QID}, we omit the proof. 
By Proposition \ref{prop Hausdorff moment sequence} and the Gauss multiplication formula
\begin{equation*}
    \Gamma(mz)(2\pi)^{\frac{m-1}{2}} \, = \, m^{mz-\frac{1}{2}} \Gamma(z)\Gamma(z+\frac{1}{m})\cdots\Gamma(z+\frac{m-1}{m}),
\end{equation*}
we can easily verify that the sequences 
\begin{align*}
        &\mu_n^{(1)} = \frac{ \Gamma(2n+ 1)}{\Gamma(n+1)\Gamma(n +1)}, 
        &\mu_n^{(2)} =  \frac{ \Gamma(6n+1)\Gamma(n+1)}{\Gamma(3n+1)\Gamma(2n+1)\Gamma(2n+1)}, \\
        &\mu_n^{(3)} = \frac{ \Gamma(3n + 1)}{\Gamma(n+1)\Gamma(2n +1)}, 
        &\mu_n^{(4)} =  \frac{ \Gamma(3n + 3)}{\Gamma(n+1)\Gamma(2n +3)}, \\
        &\mu_n^{(5)} = \frac{ \Gamma(6n + 1)\Gamma(4n + 1)}{\Gamma(2n+1)\Gamma(3n +1)\Gamma(5n + 1)},
        &\mu_n^{(6)} =  \frac{ \Gamma(5n + 1)}{\Gamma(2n+1)\Gamma(3n +1)}, \\
        &\mu_n^{(7)} =  \frac{ \Gamma(4n+2)}{\Gamma(n+1)\Gamma(3n+4)}, 
        &\mu_n^{(8)} =  \frac{ \Gamma(4n+4)}{\Gamma(n+1)\Gamma(3n+6)}, 
\end{align*}
%A000984, A061162, A005809, A025174, A001450, A304126,  A197271, %which are documented in N. J. A. Sloane’s Online Encyclopedia of Integer Seqences (OEIS) \cite{Slo64}, 
are all Hausdorff moment sequence. The sequences $\mu^{(1)}, \mu^{(3)}, \mu^{(4)}, \mu^{(6)}$ are binomial sequences, whose moment problem has been totally solved by Młotkowski and Penson \cite{MP14}. The sequences 
$\mu^{(7)}, \mu^{(8)}$ are the first two terms of the combinatorial numbers of Brown and Tutte, whose moment problem was investigated by Penson et al. \cite{PGHG22}. At the end of \cite{PGHG22}, the authors said "we believe that the sequence $A(M, n)$ belongs to a larger family of
moment sequences for which analogous relations of type $\cdots$", our Proposition \ref{prop Hausdorff moment sequence} gives a kind of such family, but it is not optimal, we leave this question to further research.

\section{Proofs}\label{Section proof}

\subsection{Proof of Theorem \ref{thm main}}
We split Theorem \ref{thm main} into three propositions.

\begin{proposition}\label{proposition main 1}
\begin{itemize}
        \item[(a)] $\X_{a,b,c,d}$ does not exist if $d > 2$.
        \item[(b)] $\X_{a,b,c,d}$ does not exist if $c < ad$.
        \item[(c)]  $\X_{a,b,c,d}$ exists if $0 < d \leq 1$ and $c \geq ad$.
    \end{itemize}
\end{proposition}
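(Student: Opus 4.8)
For (a), the plan is to read off the necessary condition already recorded in Proposition \ref{proposition Jan10 Coro 5.2}. For the variable defined by \eqref{def Xabcd} one has $\gamma = 2-d$, so the hypothesis $d>2$ forces $\gamma = 2-d < 0$. This violates both alternatives allowed by Proposition \ref{proposition Jan10 Coro 5.2} (namely $\gamma>0$, or $\gamma=0$ together with $\delta\le 0$), so no nonnegative $f$ can reproduce the moments \eqref{def Xabcd}. This part is immediate and requires no further work.

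For (b), I would use the ``vanishing moment'' obstruction already illustrated for Bosch's $\F_{\a,1}$. The underlying principle is that a genuine positive random variable has \emph{strictly} positive fractional moments throughout the interior of its moment interval: if a nonnegative $f$ satisfies \eqref{eq density f}, then $\int_0^\infty f = 1$ because the right-hand side of \eqref{def Xabcd} equals $1$ at $s=0$, whence $f$ is not a.e.\ zero and $\int_0^\infty x^{s} f(x)\,dx>0$ at every real $s$ where it is finite, since $x^s>0$. Now set $s_0 = -c/d$. The assumption $c<ad$ gives $s_0 > -a$, while $c,d>0$ give $s_0<0<b$, so $s_0$ lies strictly inside $(-a,b)$; the numerator factors $\Gamma(a+s_0)=\Gamma(a-c/d)$ and $\Gamma(b-s_0)$ are finite and positive (the strict inequality $c<ad$ keeps $a+s_0>0$), while $\Gamma(c+d s_0)=\Gamma(0)$ has a pole, so the reciprocal $1/\Gamma(c+ds)$ vanishes at $s_0$. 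Hence \eqref{def Xabcd} forces $\mathbb{E}(\X_{a,b,c,d}^{s_0})=0$ at an interior point, contradicting strict positivity. Therefore $\X_{a,b,c,d}$ does not exist.

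For (c), I would pass to $\X_{a,b,c,d}^{-1}$, which exists precisely when $\X_{a,b,c,d}$ does, and use the Mellin--Barnes representation \eqref{eq mellin three-parametric ML} to identify its candidate density as proportional to $t^{b-1}E_{d,\,c+bd}^{a+b}(-t)$, exactly as recorded just after \eqref{eq mellin three-parametric ML}. It then remains to show this function is nonnegative on $(0,\infty)$. Here I would invoke the known complete monotonicity of $E_{\rho,\mu}^{\gamma}(-z)$ valid when $0<\rho\le 1$ and $0<\gamma\rho\le\mu$. Taking $\rho=d$, $\gamma=a+b$, $\mu=c+bd$, the hypothesis $d\le 1$ gives $\rho\le 1$, and $c\ge ad$ is exactly $(a+b)d\le c+bd$, i.e.\ $\gamma\rho\le\mu$. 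Complete monotonicity yields nonnegativity, so $t^{b-1}E_{d,c+bd}^{a+b}(-t)$ is a bona fide density after normalization; integrability and the total mass are guaranteed by the finiteness of \eqref{def Xabcd} on $(-a,b)$ and its value $1$ at $s=0$. Thus $\X_{a,b,c,d}^{-1}$, and hence $\X_{a,b,c,d}$, exists.

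The only non-elementary input is the complete-monotonicity theorem used in (c); once it is cited, the remaining argument is routine bookkeeping with the Mellin inversion \eqref{eq mellin three-parametric ML}, and the translation of its hypotheses into $d\le 1$, $c\ge ad$ is the crux to verify carefully. Parts (a) and (b) are short and self-contained; in (b) the one point deserving a second look is that $s_0=-c/d$ is genuinely interior and that no numerator pole collides with it, both of which follow from the strict inequality $c<ad$.
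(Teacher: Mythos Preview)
Your proof is correct in all three parts, but parts (b) and (c) take genuinely different routes from the paper.

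For (a) you and the paper argue identically via Proposition~\ref{proposition Jan10 Coro 5.2}. For (b) the paper instead uses log-convexity: by H\"older, $s\mapsto \mathbb{E}(\X_{a,b,c,d}^s)$ must be log-convex on $(-a,b)$, but when $c<ad$ the second derivative of $\log\Gamma(a+s)+\log\Gamma(b-s)-\log\Gamma(c+ds)$ is driven to $-\infty$ near $s=-c/d$ by the dominant term $-d^2/(c+ds)^2$, a contradiction. Your vanishing-moment argument is more elementary and arguably cleaner---it extracts the same obstruction at the same point $s_0=-c/d$ without differentiating, using only that a nontrivial nonnegative density has strictly positive Mellin transform. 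For (c) the paper avoids any Mittag-Leffler analysis and instead writes down explicit product factorizations
\[
\X_{a,b,c,d}\;\elaw\;\M_{d,\,c-ad,\,a-1}\times\G_b^{-1}\quad(d<1),\qquad
\X_{a,b,c,d}\;\elaw\;\B_{a,\,c-a}\times\G_b^{-1}\quad(d=1),
\]
where $\M_{\alpha,\beta,t}$ is the tilted Wright variable with moments~\eqref{eq mellin M alpha beta t}. This is constructive and self-contained within the paper, whereas your route cites the external complete-monotonicity theorem for $E_{\rho,\mu}^\gamma(-z)$ (which the paper mentions after Corollary~\ref{coro nonnegativity of three-parametric ML} but does not invoke here). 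Your translation $\rho=d\le1$, $\gamma\rho=(a+b)d\le c+bd=\mu$ is correct, so the argument goes through; the tradeoff is that the paper's version yields an explicit distributional identity rather than a bare existence statement.
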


\begin{proposition}\label{proposition main 2}
    \begin{itemize}
        \item[(a)]     If $a\in (0,1)$, $\X_{a,1-a,c,d}$ exists if and only if $ d  \leq 2$ and 
    \begin{equation}\label{eq when a+b=1}
    c \geq 
        \begin{cases}
            ad &, \quad  0<d\leq 1 ,\\
            ad -d + f(d)  &,  \quad  1<d\leq 2,
        \end{cases}
    \end{equation}
    where $f$ is the function defined in Theorem \ref{Theorem admissible parameter domain}.
        \item[(b)] $\X_{a,b,c,d}$ exists if $1< d \leq 2, a + b < 1$ and $c$ satisfies \eqref{eq when a+b=1}. 
        \item[(c)]  $\X_{a,b,c,d}$ does not exist if $1 < d \leq 2, a + b > 1$ and $c$ does not satisfy \eqref{eq when a+b=1}.
    \end{itemize}

\end{proposition}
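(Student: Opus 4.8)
The plan is to treat (a) as the substantive case and to obtain (b) and (c) from it by elementary multiplicative factorizations.

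For (a), set $b=1-a$. The reflection formula gives $\Gamma(a+s)\Gamma(1-a-s)=\pi/\sin(\pi(a+s))$ and $\Gamma(a)\Gamma(1-a)=\pi/\sin(\pi a)$, so that
$$\mathbb{E}(\X_{a,1-a,c,d}^s)=\Gamma(c)\,\frac{\sin(\pi a)}{\sin(\pi(a+s))}\,\frac{1}{\Gamma(c+ds)}.$$
Here the third Mittag-Leffler parameter equals $a+b=1$, so $E^{1}_{d,\mu}=E_{d,\mu}$ is the two-parametric function. By the proportionality recorded after \eqref{eq mellin three-parametric ML} (with $\gamma=1$), the Mellin transform of $t^{-a}E_{d,\,c+(1-a)d}(-t)$ matches, up to a positive constant, the moments of $\X_{a,1-a,c,d}^{-1}$; by Mellin determinacy this function is the density whenever it is non-negative, and must be the density if $\X_{a,1-a,c,d}$ exists. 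Hence $\X_{a,1-a,c,d}$ exists if and only if $E_{d,\mu}(-t)\ge 0$ for all $t>0$, with $\mu=c+(1-a)d$. I would then invoke Theorem \ref{Theorem admissible parameter domain}, which determines the admissible $(\rho,\mu)$ for the two-parametric function and in particular defines $f$: non-negativity forces $d\le 2$ (matching Proposition \ref{proposition main 1}(a)) and requires $\mu\ge d$ for $0<d\le 1$ and $\mu\ge f(d)$ for $1<d\le 2$; substituting $\mu=c+(1-a)d$ yields exactly \eqref{eq when a+b=1}.

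For (b) and (c) I would establish the Gamma-algebra identity, valid for $b'>b>0$,
$$\X_{a,b,c,d}\elaw\X_{a,b',c,d}\times\B_{b,\,b'-b}^{-1},$$
verified directly on Mellin transforms using the Beta moment formula from the introduction, with $\B_{b,b'-b}^{-1}$ a genuine positive variable on $(1,\infty)$. (The companion identity $\X_{a,b,c,d}\elaw\X_{a,b,c',d}\times\B_{c',c-c'}^{d}$ for $c>c'$ records that existence is monotone in $c$ and may be used to reduce to the threshold value.) Since \eqref{eq def characteristic function of Gamma type} is a characteristic function, a matching Mellin transform on the relevant strip fixes the law, so these identities in law are rigorous; and products of independent non-negative variables are non-negative, so existence is preserved in the direction of the product. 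Part (b): as $a+b<1$ we have $a<1$ and $b<b':=1-a$, whence $\X_{a,b,c,d}\elaw\X_{a,1-a,c,d}\times\B_{b,1-a-b}^{-1}$, and by (a) the first factor exists (here $a\in(0,1)$, $1<d\le 2$, and $c$ meets \eqref{eq when a+b=1}), so $\X_{a,b,c,d}$ exists. Part (c) for $a<1$: as $a+b>1$ we have $1-a<b$, and reading the identity with $b'=b$ and $b\mapsto 1-a$ gives $\X_{a,1-a,c,d}\elaw\X_{a,b,c,d}\times\B_{1-a,\,a+b-1}^{-1}$; were $\X_{a,b,c,d}$ to exist, the right-hand side would be a genuine product of non-negative variables, so $\X_{a,1-a,c,d}$ would exist, contradicting (a) (since $c$ fails \eqref{eq when a+b=1}).

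The hard part is twofold. First, the whole scheme rests on the sharp non-negativity threshold $f$ for $E_{d,\mu}(-t)$ on $1<d\le 2$ furnished by Theorem \ref{Theorem admissible parameter domain}; this is delicate precisely in the oscillatory regime near $d=2$, where $E_{2,3}(-t)=(1-\cos\sqrt t)/t\ge 0$ is the borderline case giving $f(2)=3$, and controlling the sign of $E_{d,\mu}(-t)$ uniformly in $t$ is the real content. Second, the factorizations move $b$ but preserve $a$, so they reach the two-parametric base $a+b=1$ only when $a<1$. The residual case $a\ge 1$ in part (c)—where $\B_{1-a,\,a+b-1}$ has a non-positive parameter and no positive Euler kernel lowers the third Mittag-Leffler parameter to $1$—lies outside this reduction and needs a direct treatment, e.g. an asymptotic sign analysis of $t^{b-1}E^{a+b}_{d,\,c+bd}(-t)$ as $t\to\infty$ through its Mellin--Barnes representation, exhibiting a negative excursion once $c<ad-d+f(d)$. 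I expect this $a\ge 1$ analysis to be the main obstacle.
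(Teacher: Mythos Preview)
Your approach is essentially identical to the paper's: part (a) is reduced, via the density identification \eqref{eq mellin X inverse} with $\gamma=a+b=1$, to the non-negativity of the two-parametric Mittag-Leffler function and then to Theorem \ref{Theorem admissible parameter domain}; parts (b) and (c) are obtained from exactly the Beta factorizations you write, namely $\X_{a,b,c,d}\elaw\X_{a,1-a,c,d}\times\B_{b,1-a-b}^{-1}$ when $a+b<1$ and $\X_{a,1-a,c,d}\elaw\X_{a,b,c,d}\times\B_{1-a,a+b-1}^{-1}$ when $a+b>1$.

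Your worry about the residual case $a\ge 1$ in (c) is well placed, but it is not something the paper resolves either: the paper's proof of (c) is precisely the contradiction argument via $\B_{1-a,\,a+b-1}^{-1}$, which requires $1-a>0$. No separate treatment of $a\ge 1$ is given, so the Mellin--Barnes sign analysis you propose as an obstacle is not in the paper; the published argument tacitly covers only $0<a<1$.
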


\begin{proposition}\label{proposition main 3}
    \begin{itemize}
        \item[(a)]  $\X_{a,b,c,2}$ does not exist if $3a + b > c$.
        \item[(b)] $\X_{a,b,c,2}$ exists if $c \geq 3a +b$ and $2(\frac{c}{2}-a)(\frac{c}{2}+\frac{1}{2}-a)\geq a+b$.\\
    In particular, if $a+b \geq 1$,  $\X_{a,b,c,2}$ exists if and only if $c \geq 3a + b$.
        \item[(c)] $\X_{a,b,c,d}$ exists if $1< d < 2$,  $\frac{2c}{d} \geq 3a+b$ and $2(\frac{c}{d}-a)(\frac{c}{d}+\frac{1}{2}-a)\geq a+b$. \\
    In particular, if $a+b \geq 1$,  $\X_{a,b,c,d}$ exists if $1< d < 2$ and $\frac{2c}{d} \geq 3a + b$.
    \end{itemize}
\end{proposition}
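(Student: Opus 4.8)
The plan is to reduce both values $d=2$ and $1<d<2$ to the two–denominator–Gamma framework of Theorem~\ref{Theorem KSW20 THM1} via the Legendre duplication formula, and then to climb down from $d=2$ to $1<d<2$ through a multiplicative factorization by a Wright–function variable. For the reduction at $d=2$, I would apply $\Gamma(c+2s)=\pi^{-1/2}2^{c+2s-1}\Gamma(\tfrac c2+s)\Gamma(\tfrac{c+1}{2}+s)$ to \eqref{def Xabcd}. Since $\Gamma(c)=\pi^{-1/2}2^{c-1}\Gamma(\tfrac c2)\Gamma(\tfrac{c+1}{2})$, the constants $2^{c-1}/\sqrt\pi$ cancel and one is left with
\begin{equation*}
\mathbb{E}(\X_{a,b,c,2}^s)=4^{-s}\,\mathbb{E}\!\left(D\!\left[\begin{array}{cc} a & b \\ (\tfrac c2,\tfrac{c+1}{2}) & - \end{array}\right]^{s}\right),
\end{equation*}
so that $\X_{a,b,c,2}\elaw \tfrac14\,D[\,\cdots]$ and the existence of $\X_{a,b,c,2}$ is equivalent to that of the KSW20 variable with denominator parameters $c'=\tfrac c2$ and $d'=\tfrac{c+1}{2}$.

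\emph{Parts (a) and (b).} With $c'+d'=c+\tfrac12$ and $\min(c',d')=\tfrac c2$, the non-existence clause of Theorem~\ref{Theorem KSW20 THM1}(b) (namely $c'+d'<3a+b+\tfrac12$ or $\min(c',d')\le a$) becomes exactly $c<3a+b$, the alternative $c\le 2a$ being subsumed since $2a<3a+b$; this is (a). Dually, the existence clause of Theorem~\ref{Theorem KSW20 THM1}(a) translates into $c\ge 3a+b$ together with $2(\tfrac c2-a)(\tfrac c2+\tfrac12-a)\ge a+b$, the positivity requirements $c',d'>a$ being automatic once $c\ge 3a+b$ (indeed $\tfrac c2\ge\tfrac{3a+b}{2}>a$); this is (b). For the "in particular" clause I would note that $c\ge 3a+b$ forces $\tfrac c2-a\ge\tfrac{a+b}{2}$ and $\tfrac c2+\tfrac12-a\ge\tfrac{a+b+1}{2}$, whence $2(\tfrac c2-a)(\tfrac c2+\tfrac12-a)\ge \tfrac{(a+b)(a+b+1)}{2}\ge a+b$ precisely when $a+b\ge 1$; combined with the necessity from (a) this yields the equivalence.

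\emph{Part (c).} The key step is the independent factorization
\begin{equation*}
\X_{a,b,c,d}\elaw \X_{a,b,\,2c/d,\,2}\times \M_{d/2,\,0,\,2c/d-1}^{2},
\end{equation*}
which I would verify at the level of Mellin transforms: using \eqref{eq mellin M alpha beta t} with $\a=\tfrac d2\in[0,1)$, $\beta=0$ and $t=\tfrac{2c}{d}-1$, the square $\M_{d/2,0,2c/d-1}^{2}$ has $s$-th moment $\tfrac{\Gamma(c)}{\Gamma(2c/d)}\tfrac{\Gamma(2c/d+2s)}{\Gamma(c+ds)}$, which multiplies $\mathbb{E}(\X_{a,b,2c/d,2}^s)$ to give exactly the right-hand side of \eqref{def Xabcd}. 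Both factors are genuine positive variables: the Wright variable exists because $\tfrac d2\in[0,1)$, $\beta=0\ge 0$ and $t>-1$; and $\X_{a,b,2c/d,2}$ exists by part (b), whose two hypotheses with $c'=\tfrac{2c}{d}$ read $\tfrac{2c}{d}\ge 3a+b$ and $2(\tfrac cd-a)(\tfrac cd+\tfrac12-a)\ge a+b$, which are exactly the hypotheses of (c). Since an independent product of positive variables that both have densities again has a density (multiplicative convolution), $\X_{a,b,c,d}$ exists. The "in particular" clause follows from the same automatic-inequality computation as in (b), now with $\tfrac cd$ in place of $\tfrac c2$.

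\emph{Main obstacle.} The routine parts are the duplication reduction and the translation of the KSW20 inequalities. The creative step I expect to be hardest is discovering the factorization in (c): recognizing that the ratio $\Gamma(2c/d+2s)/\Gamma(c+ds)$ is the Mellin transform of the square of a Wright–function variable with the specific parameters $(\a,\beta,t)=(d/2,0,2c/d-1)$, and in particular choosing $c'=2c/d$ (the largest admissible value, which is optimal because $2(\tfrac{c'}{2}-a)(\tfrac{c'}{2}+\tfrac12-a)$ increases in $c'$) so that the degenerate choice $\beta=0$ still yields a bona fide distribution while matching the stated hypotheses exactly.
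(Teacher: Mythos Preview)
Your proof is correct and takes essentially the same approach as the paper: the duplication formula reduces $d=2$ to Theorem~\ref{Theorem KSW20 THM1} with denominator parameters $(\tfrac c2,\tfrac{c+1}{2})$, and the factorization $\X_{a,b,c,d}\elaw \X_{a,b,2c/d,2}\times \M_{d/2,0,2c/d-1}^{2}$ transfers the result to $1<d<2$. You supply more detail than the paper does (the explicit Mellin verification and the check of the ``in particular'' clauses), but the argument is identical.
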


\subsubsection{Proof of proposition \ref{proposition main 1}}
(a) By Proposition \ref{proposition Jan10 Coro 5.2}, if $\X_{a,b,c,d}$ exists, then it is necessary that $d \leq 2$. \\
(b) By H\"older's inequality, and choosing $p = 1/t,\, q = 1/(1-t)$, we can show that the function $s \, \mapsto \, \mathbb{E}(\X_{a,b,c,d}^s)$ is log-convex on $(-a,b)$. The second derivative of $\log \Gamma(a+s) + \log \Gamma(b-s) - \log \Gamma(c+ds)$ is
\begin{equation}
    \label{eq second derivative}
    \sum_{n\geq 0} \frac{1}{(a+s+n)^2} + \sum_{n\geq 0} \frac{1}{(b-s+n)^2} - \sum_{n\geq 0} \frac{d^2}{(c+ds+n)^2}.
\end{equation}
If $c < ad$, then \eqref{eq second derivative} becomes negative in the neighborhood of $- c/d$, which contradicts the log-convexity, therefore $\X_{a,b,c,d}$ does not exist. \\
(c)  $\X_{a,b,c,d}$ exists because
\begin{equation*}
    \X_{a,b,c,d} \elaw \M_{d, c-ad, a-1} \times \G_b^{-1}, \quad d < 1,
\end{equation*}
recall that the fractional moments of $\M_{d, c-ad, a-1}$ is given by \eqref{eq mellin M alpha beta t} and
\begin{equation*}
    \X_{a,b,c,d} \elaw \B_{a,c-a} \times \G_b^{-1}, \quad d = 1.
\end{equation*}

\subsubsection{Proof of proposition \ref{proposition main 2}} This proof relies on the distribution of zeros of the two-parametric Mittag-Leffler functions. 
From \eqref{eq mellin three-parametric ML}, we have, for $ -b < s < a$,
\begin{equation}
\label{eq mellin X inverse}
    \int_0^\infty t^{s} t^{b-1}E_{d, c + bd}^{a+b}(-t)dt = \frac{\Gamma(b+s)\Gamma(a-s)}{\Gamma(a+b)\Gamma(c - d s)}.
\end{equation}
Comparing with \eqref{def Xabcd}, we know that the density of $\X_{a,b,c,d}^{-1}$ ($a,b,c,d >0$) is proportional to $t^{b-1}E_{d, c + bd}^{a+b}(-t)$. Therefore, 
\begin{equation}\label{fact equiv}
    \text{the existence of $\X_{a,b,c,d}$ is equivalent to the non-negativity of $E_{d, c + bd}^{a+b}(-t)$.}
\end{equation}
 When $a + b = 1$, we recover the two-parametric Mittag-Leffler function,  of which the distribution of the zeros has been studied in numerous works (see e.g. \cite{PS03, Sed04, Psk05, PS13} and the references therein). We collect related known results and reformulate them in the following Theorem \ref{Theorem admissible parameter domain}. More precisely, 
Pskhu \cite[Theorem 2]{Psk05} has proved that $\lacc 0 < \rho \leq 1, \mu \geq \rho  \racc \cup \lacc 1 < \rho \leq 2, \mu \geq 3 \rho /2  \racc  \subseteq \mathcal{D}$ and $\lacc \rho > 0 , \mu < \rho  \racc \cup \lacc  \rho \geq 2, \mu \leq 3 \rho /2, (\rho, \mu) \neq (2,3)  \racc  \subseteq \mathcal{D}^c$. Popov and Sedletskii \cite[Chapter 6]{PS13} thoroughly studied the case $\rho \in (1,2)$, and the functions $L, U$ comes from \cite[Theorem 6.1.3]{PS13}. And \cite[Theorem 2.1.4]{PS13} says that $E_{\rho, \mu}$ takes negative values for any pair in $\lacc \rho > 2, \mu > 0 \racc$. 

\begin{theorem}\label{Theorem admissible parameter domain}\cite{PS13}
Let 
\begin{equation}
    \mathcal{D} := \lacc (\rho, \mu): E_{\rho, \mu}(- t) \geq 0, \, \forall \, t>0 \racc.
\end{equation}
Then there exists an increasing function $f$ on $[1,2]$, such that
\begin{equation}
    \lacc 0 < \rho \leq 1, \mu \geq \rho  \racc \cup \lacc 1 < \rho \leq 2, \mu \geq f(\rho) \racc   = \mathcal{D}, 
\end{equation}  
    where $f(1) = 1$, $f(2) = 3$, and
    \begin{equation}\label{eq def f}
        \text{ $ \rho < L(\rho) < f(\rho) < U(\rho) < 3\rho/2$ for $1 < \rho < 2$, }
    \end{equation} with
    \begin{equation}\label{eq function L}
        L(\rho) =   \begin{cases}
            \rho + \exp (-\pi \cot(\pi(1-1/\rho))) & , \quad 1 < \rho < 3/2, \\
            3(\rho - 1) + 0.7(2-\rho)^2 & , \quad 3/2 \leq \rho < 2,
        \end{cases}
    \end{equation}and 
        \begin{equation}\label{eq function U}
        U(\rho) =   \begin{cases}
            4\rho/3 & , \quad 1 < \rho < 3/2, \\
            2\rho -1 & , \quad 3/2 \leq \rho < 2.
        \end{cases}
    \end{equation}
    
\end{theorem}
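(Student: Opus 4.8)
The plan is to reduce the two-dimensional problem to a one-dimensional one by first establishing monotonicity in the second parameter $\mu$, and then to assemble the quoted results of Pskhu and of Popov--Sedletskii to control the resulting threshold curve $f$.

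First I would prove that for fixed $\rho$ the admissible set $\{\mu : (\rho,\mu)\in\mathcal{D}\}$ is an up-set. The key tool is the Beta-integral recurrence obtained by writing, for $\mu'>\mu>0$,
\[
\frac{\Gamma(\mu+\rho n)}{\Gamma(\mu'+\rho n)}=\frac{1}{\Gamma(\mu'-\mu)}\int_0^1 u^{\mu+\rho n-1}(1-u)^{\mu'-\mu-1}\,du,
\]
which, after substituting into the defining series and summing, yields
\[
E_{\rho,\mu'}(-t)=\frac{1}{\Gamma(\mu'-\mu)}\int_0^1 u^{\mu-1}(1-u)^{\mu'-\mu-1}\,E_{\rho,\mu}(-tu^\rho)\,du.
\]
Since $tu^\rho\ge 0$, whenever $(\rho,\mu)\in\mathcal{D}$ the integrand is non-negative, so $(\rho,\mu')\in\mathcal{D}$. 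Equivalently, in the probabilistic language of \eqref{def Xabcd}, one checks $\X_{a,b,c',d}\elaw\X_{a,b,c,d}\times\B_{c,c'-c}^{d}$ for $c'>c$, which transfers existence upward in $c$, hence in $\mu=c+bd$. Together with the continuity of $\mu\mapsto E_{\rho,\mu}(-t)$, uniform on compacts, this shows $\mathcal{D}$ is closed in $\mu$, so the admissible set equals $[f(\rho),\infty)$ with $f(\rho):=\inf\{\mu:(\rho,\mu)\in\mathcal{D}\}$, the infimum being attained whenever finite.

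Next I would pin down $f$ region by region from the cited statements. For $0<\rho\le 1$, Pskhu gives $\{\mu\ge\rho\}\subseteq\mathcal{D}$ and $\{\mu<\rho\}\subseteq\mathcal{D}^c$, so $f(\rho)=\rho$ there. For $\rho>2$, the combination of Pskhu and \cite[Theorem 2.1.4]{PS13} gives $\mathcal{D}\cap\{\rho>2\}=\emptyset$, which is exactly what the identity $\{\cdots\}=\mathcal{D}$ demands outside $[0,2]$. For $1<\rho\le 2$, finiteness of $f(\rho)$ follows from $\{\mu\ge 3\rho/2\}\subseteq\mathcal{D}$, and the two-sided bound $L(\rho)<f(\rho)<U(\rho)$ is read off from the boundary analysis of \cite[Theorem 6.1.3]{PS13}; the monotonicity of $f$ likewise comes from the parametric description of the non-negativity boundary given there. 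The endpoint values then follow by a squeeze: $L(1)=1$ (since $\cot(\pi(1-1/\rho))\to+\infty$ as $\rho\downarrow 1$ forces the exponential term to vanish) matches the $\rho\le1$ value, while $L(2)=U(2)=3$ forces $f(2)=3$, and continuity of $f$ glues the pieces.

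Finally I would close the statement with the two elementary inequalities $\rho<L(\rho)$ and $U(\rho)<3\rho/2$ on $(1,2)$, which are direct verifications from the explicit formulas \eqref{eq function L}--\eqref{eq function U}: for $L$ one checks positivity of the exponential term on $(1,3/2)$ and of $3(\rho-1)+0.7(2-\rho)^2-\rho$ on $[3/2,2)$, while for $U$ one compares $4\rho/3$ and $2\rho-1$ against $3\rho/2$. The genuinely hard input is the sharp control of $f$ on $(1,2)$, that is, the existence of a single monotone threshold trapped between $L$ and $U$; this rests on the zero-distribution analysis of $E_{\rho,\mu}$ (asymptotics of the function combined with the argument principle) carried out in \cite{PS13}, which I would invoke rather than reprove. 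With that black box in place, the monotonicity in $\mu$, the Pskhu boundary values, and the inequality checks are routine.
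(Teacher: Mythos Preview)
Your proposal is correct and in fact more detailed than what the paper itself offers. The paper does not prove this theorem: it is stated as a citation of \cite{PS13}, and the surrounding paragraph simply records which ingredients are drawn from where (Pskhu \cite[Theorem~2]{Psk05} for the regions $\{0<\rho\le 1,\mu\ge\rho\}$, $\{1<\rho\le 2,\mu\ge 3\rho/2\}\subseteq\mathcal{D}$ and $\{\rho>0,\mu<\rho\}$, $\{\rho\ge 2,\mu\le 3\rho/2,(\rho,\mu)\ne(2,3)\}\subseteq\mathcal{D}^c$; Popov--Sedletskii \cite[Theorem~6.1.3]{PS13} for the bounds $L,U$ on $(1,2)$; and \cite[Theorem~2.1.4]{PS13} for $\rho>2$). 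You invoke exactly the same sources for exactly the same regions, so the approaches coincide at the level of what is taken as a black box.

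Where you go beyond the paper is in supplying the glue explicitly: the Beta-integral identity
\[
E_{\rho,\mu'}(-t)=\frac{1}{\Gamma(\mu'-\mu)}\int_0^1 u^{\mu-1}(1-u)^{\mu'-\mu-1}E_{\rho,\mu}(-tu^\rho)\,du
\]
(equivalently the factorization $\X_{a,b,c',d}\elaw\X_{a,b,c,d}\times\B_{c,c'-c}^{\,d}$) is a clean self-contained argument that the admissible set in $\mu$ is an up-set, which the paper leaves implicit in the cited references. Your endpoint computations $f(1)=1$, $f(2)=3$ and the elementary checks $\rho<L(\rho)$, $U(\rho)<3\rho/2$ are also correct and not spelled out in the paper. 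The one item you still take on faith, as does the paper, is that $f$ is \emph{increasing} on $[1,2]$; your Beta-integral identity gives monotonicity in $\mu$ but not in $\rho$, so this genuinely rests on the zero-distribution analysis in \cite{PS13}.
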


Then Proposition \ref{proposition main 2} (a) follows from \eqref{fact equiv} and Theorem \ref{Theorem admissible parameter domain}. Proposition \ref{proposition main 2} (b) follows from Proposition \ref{proposition main 2} (a) and the following identity in law: 
\begin{equation}
     \X_{a,b,c,d} \elaw  \X_{a,1-a,c,d} \times \B_{b,1-a-b}^{-1}, 
\end{equation}
where $d \leq 2, a + b < 1$ and $c$ satisfies \eqref{eq when a+b=1}. 

We suppose that there exists a random variable $\X_{a,b,c,d}$, where $d \leq 2, a + b > 1$ and $c$ does not satisfy \eqref{eq when a+b=1}. Then from
\begin{equation}
     \X_{a,1-a,c,d} \elaw  \X_{a,b,c,d} \times \B_{1-a,a+b-1}^{-1}, 
\end{equation}
$ \X_{a,1-a,c,d}$ exists, which contradicts Proposition \ref{proposition main 2} (a), hence, we have proved Proposition \ref{proposition main 2} (c).

\subsubsection{Proof of proposition \ref{proposition main 3}} This proof relies on the result of Kadankova, Simon and Wang \cite{KSW20}. 
When $d = 2$, by the Legendre duplication formula for the gamma function
\begin{equation*}
    \Gamma(1/2)\Gamma(2x) = 2^{2x-1}\Gamma(x)\Gamma(x+1/2),
\end{equation*}
we have that $\X_{a,b,c,d}$ is equal in law to $D\left[ \begin{array}{cc}
    a & b \\
    (\frac{c}{2},\frac{c+1}{2}) & -
\end{array}  \right] $, up to a multiplicative constant. Then Proposition \ref{proposition main 3} (a) and (b) follows from Theorem \ref{Theorem KSW20 THM1}. \\
Proposition \ref{proposition main 3} (c) follows from Proposition \ref{proposition main 3} (b) and the identity in law
\begin{equation}
    \X_{a,b,c,d} \elaw \X_{a,b,\frac{2c}{d},2} \times \M_{\frac{d}{2},0,\frac{2c}{d}-1}^{2}, \quad d < 2, 
\end{equation}
recall that the fractional moments of $\M_{\frac{d}{2},0,\frac{2c}{d}-1}$ is given by \eqref{eq mellin M alpha beta t}.

\subsection{Proof of Theorems \ref{Theorem ID of half alpha Cauchy}}
This proof relies on Theorem \ref{thm main} and Kristiansen's Theorem. 
First, we observe that 
\begin{equation}\label{eq Mellin alpha Cauchy}
    \mathbb{E}\left[|\mathcal{C}_\a|^s\right] =  \frac{\sin(\pi/\a)}{\pi} \Gamma\left(\frac{1}{\a} +\frac{s}{\a}\right)\Gamma\left(1 - \frac{1}{\a} - \frac{s}{\a}\right), \quad -1 < s < \a-1.
\end{equation}
Therefore, we have
\begin{equation}
    |\mathcal{C}_\a| \elaw \G_{1/\a}^{1/\a} \times  \G_{1-1/\a}^{-1/\a}.
\end{equation}
For $|p| \geq 1$, it is easy to see that $\G_a^p$ is HCM. %For $-1 < p < 0$, Bosch and Simon \cite[Theorem 4]{BS15} proved that it is GGC. However, $\G_a^p$ is not ID if $s \in (0,1)$, because of its superexponential distribution tails - see e.g. Sato \cite[Theorem 26.1]{Sat99}.
Hence, 
\begin{equation}\label{result for |p| >= alpha}
    \text{$|\mathcal{C}_\a|^p$ is HCM if $|p| \geq \a$. }
\end{equation}
The right hand side of \eqref{eq Mellin alpha Cauchy} tends to $1/(1+s)$ as $\a$ tends to infinity, thus 
\begin{equation}
    |\mathcal{C}_\a| \claw \mathcal{U}, \qquad \text{as} \, \,\,\a \rightarrow \infty, 
\end{equation}
where $\mathcal{U}$ is a uniform random variable on $(0,1)$, which is not ID. Therefore, $|\mathcal{C}_\a|$ is not ID when $\a$ is large enough. On the other hand, $ \mathcal{U}^{-p}$ is GGC since $\mathcal{U}^{-p} - 1$ is HCM, for any $p > 0$.

\begin{proof}[Proof of Theorem \ref{Theorem ID of half alpha Cauchy}]
By Proposition \ref{proposition Jan10 Coro 5.2}, for $p < \a/2$, $|\mathcal{C}_\a |^{\varepsilon p}$ is not a Gamma mixture, the Kristiansen's Theorem does not work in this case. For $p \geq \a$, we have already seen that $|\mathcal{C}_\a |^{\varepsilon p}$ is HCM, hence ID. We only consider the case $p \in [\a /2, \a)$. 
For $ 1 \leq q < 2$, 
\begin{equation}
    |\mathcal{C}_\a |^{q\frac{\a}{2}} \elaw \X_{\frac{1}{\a},1-\frac{1}{\a},\mu - \frac{2(\a-1)}{q\a},\frac{2}{q}}^{\frac{q}{2}} \times \G_{\mu - \frac{2(\a-1)}{q\a}}, \quad \mu \geq f(2/q),
\end{equation}

\begin{equation}
    |\mathcal{C}_\a |^{-q\frac{\a}{2}} \elaw \X_{1-\frac{1}{\a},\frac{1}{\a},\mu - \frac{2}{q\a},\frac{2}{q}}^{\frac{q}{2}} \times \G_{\mu - \frac{2}{q\a}}, \quad \mu \geq f(2/q). 
\end{equation}

$|\mathcal{C}_\a |^{ \varepsilon q\frac{\a}{2}}, \varepsilon = \pm 1,$ can be a $\G_2$-mixture if and only if 
\begin{equation} 2 \geq 
    \begin{cases}
        f(2/q) - \frac{2(\a-1)}{q\a}  &, \quad \varepsilon = 1. \\
        f(2/q) - \frac{2}{q\a}  &, \quad \varepsilon = -1.  \\
    \end{cases}
\end{equation}
where $f$ is defined in \eqref{eq def f}. 
By Kristiansen's Theorem, we have that 
$|\mathcal{C}_\a |^{ \varepsilon q\frac{\a}{2}}, \varepsilon = \pm 1,$ is ID if 
\begin{equation}  f(2/q) \leq 
    \begin{cases}
       2 + \frac{2(\a-1)}{q\a}  &, \quad \varepsilon = 1. \\
       2 + \frac{2}{q\a}  &, \quad \varepsilon = -1.  \\
    \end{cases}
\end{equation}
Recall that  $f(2/q) < U(2/q)$, $U$ is defined in \eqref{eq function U}, we then have 
$|\mathcal{C}_\a |^{ \varepsilon q\frac{\a}{2}}, \varepsilon = \pm 1,$ is ID if 
\begin{equation}  q \geq 
    \begin{cases}
     \max (1, 2(\a+1)/(3\a)  &, \quad \varepsilon = 1. \\
      \max (1, (4\a -2)/(3\a))  &, \quad \varepsilon = -1.  \\
    \end{cases}
\end{equation}
Let $p = \frac{\a}{2}q$, we can obtain \eqref{eq range p}. 
\end{proof}

\bigskip

%\noindent
%\textbf{Acknowledgements.} I would like to thank Prof. Thomas Simon and Prof. Fuqing Gao for their encouragements and comments.

%\vspace{3mm}
%\textbf{We state that there is no conflict of interest.}

\bibliographystyle{plain} 
\bibliography{MomentGammaML}
\end{document}